\documentclass[reqno]{amsart}
\usepackage{amsmath}
\usepackage{amssymb}
\usepackage{amsthm}
\begin{document}
\def\eq#1{{\rm(\ref{#1})}}
\theoremstyle{plain}
\newtheorem*{theo}{Theorem}
\newtheorem*{ack}{Acknowledgements}
\newtheorem*{pro}{Proposition}
\newtheorem*{coro}{Corollary}
\newtheorem{thm}{Theorem}[section]
\newtheorem{lem}[thm]{Lemma}
\newtheorem{prop}[thm]{Proposition}
\newtheorem{cor}[thm]{Corollary}
\theoremstyle{definition}
\newtheorem{dfn}[thm]{Definition}
\newtheorem*{rem}{Remark}
\def\coker{\mathop{\rm coker}}
\def\ind{\mathop{\rm ind}}
\def\Re{\mathop{\rm Re}}
\def\vol{\mathop{\rm vol}}
\def\Im{\mathop{\rm Im}}
\def\im{\mathop{\rm im}}
\def\Hol{{\textstyle\mathop{\rm Hol}}}
\def\C{{\mathbin{\mathbb C}}}
\def\R{{\mathbin{\mathbb R}}}
\def\N{{\mathbin{\mathbb N}}}
\def\Z{{\mathbin{\mathbb Z}}}
\def\O{{\mathbin{\mathbb O}}}
\def\L{{\mathbin{\mathcal L}}}
\def\X{{\mathbin{\mathcal X}}}
\def\al{\alpha}
\def\be{\beta}
\def\ga{\gamma}
\def\de{\delta}
\def\ep{\epsilon}
\def\io{\iota}
\def\ka{\kappa}
\def\la{\lambda}
\def\ze{\zeta}
\def\th{\theta}
\def\vt{\vartheta}
\def\vp{\varphi}
\def\si{\sigma}
\def\up{\upsilon}
\def\om{\omega}
\def\De{\Delta}
\def\Ga{\Gamma}
\def\Th{\Theta}
\def\La{\Lambda}
\def\Om{\Omega}
\def\Up{\Upsilon}
\def\sm{\setminus}
\def\na{\nabla}
\def\pd{\partial}
\def\op{\oplus}
\def\ot{\otimes}
\def\bigop{\bigoplus}
\def\iy{\infty}
\def\ra{\rightarrow}
\def\longra{\longrightarrow}
\def\dashra{\dashrightarrow}
\def\t{\times}
\def\w{\wedge}
\def\bigw{\bigwedge}
\def\d{{\rm d}}
\def\bs{\boldsymbol}
\def\ci{\circ}
\def\ti{\tilde}
\def\ov{\overline}
\def\sv{\star\vp}
\title[Closed $G_2$-Structures]{Diffeomorphisms of $7$-Manifolds with Closed $G_2$-Structure}

\author[Cho, Salur, and Todd]{Hyunjoo Cho, Sema Salur, and A. J. Todd}

\address {Department of Mathematics, University of Rochester, Rochester, NY, $14627$}
\email{cho@math.rochester.edu}

\address {Department of Mathematics, University of Rochester, Rochester, NY, 14627}
\email{salur@math.rochester.edu}

\address {Department of Mathematics, University of California - Riverside, Riverside, CA, 92521}
\email{ajtodd@math.ucr.edu}

\begin{abstract}
We introduce $G_2$-vector fields, Rochesterian $1$-forms and Rochesterian vector fields on manifolds with a closed $G_2$-structure as analogues of symplectic vector fields, Hamiltonian functions and Hamiltonian vector fields respectively, and we show that the spaces $\X_{G_2}$ of $G_2$-vector fields and $\X_{Roc}$ of Rochesterian vector fields are Lie subalgebras of the Lie algebra of vector fields with the standard Lie bracket. We also define, in analogy with the Poisson bracket on smooth real-valued functions from symplectic geometry, a bracket operation on the space of Rochesterian $1$-forms $\Om^1_{Roc}$ associated to the space of Rochesterian vector fields and prove, despite the lack of a Jacobi identity, a relationship between this bracket and diffeomorphisms which preserve $G_2$-structures.
\end{abstract}

\date{}
\maketitle
\section*{Introduction}
The possible holonomy groups for a given $7$-dimensional Riemannian manifold include the exceptional Lie group $G_2$ by Berger's classification of Riemannian holonomy groups. Such manifolds are called \emph{$G_2$-manifolds} and are equipped with a nondegenerate differential $3$-form $\vp$ which is torsion-free, $\na\vp=0$, with respect to the Levi-Civita connection of the metric $g_{\vp}$ defined by $\vp$. This torsion-free condition is equivalent to $\vp$ being closed and coclosed, see \cite{Jo} or \cite{Sa}. Much work has been done to study manifolds with $G_2$-holonomy, e. g., \cite{Br1}, \cite{BrSa} and \cite{Jo}, but the condition $\vp$ be coclosed, $\d^*\vp=0$, is a nonlinear condition since $\d^*$ depends on the Hodge star given by the metric $g_{\vp}$ above. If we drop this coclosed condition, then we have a manifold with a \emph{closed $G_2$-structure}. Manifolds with closed $G_2$-structures have been studied in many articles including \cite{Br2}, \cite{BrXu} and \cite{ClIv}; however, these papers however focused predominantly on the $G_2$ metric itself defined by the nondegenerate closed $3$-form $\vp$. We shift our focus to the form $\vp$ and to results which depend on $\vp$ being nondegenerate and closed; this article is a continuation of a project which began with \cite{ACS} to better understand $G_2$-geometry by using the well-established areas of symplectic and contact geometry.

Treating symplectic geometry and $G_2$ geometry as analogues is not new. \cite{BrGr} and \cite{Gray} study vector cross products on linear spaces and on manifolds; further, one can construct, using a metric, a nondegenerate differential form of degree $k+1$ associated to a $k$-fold vector cross product. In particular, it is shown that associated to a $1$-fold vector cross product, i. e., an almost complex structure, there is a nondegenerate $2$-form, which, when this form is closed, yields a symplectic form; in an analogous way, we can view $G_2$ geometry as the geometry of $2$-fold vector cross products in dimension $7$ ($2$-fold vector cross products only exist in dimensions $3$ and $7$ as is shown in the above mentioned articles). In particular, we get the fundamental $G_2$ $3$-forms $\vp$ associated to these $2$-fold vector cross products in dimension $7$. Examples of manifolds with $G_2$-structures satisfying various conditions (including closed $G_2$-structures) are studied, for example, in \cite{CMS}, \cite{Fe1}, \cite{Fe2} and \cite{FeIg}, and classified in \cite{FeGr}. Links between Calabi-Yau geometry and $G_2$ geometry in the context of mirror symmetry have been actively explored by many mathematicians and theoretical physicists over the course of many articles including, for example, works by Atiyah-Witten \cite{AtWi}, Akbulut-Salur \cite{AkSa}, Gukov-Yau-Zaslow \cite{GYZ}, Lee-Leung \cite{LeLe} and Leung \cite{Leun}.

From a different perspective, there is multisymplectic geometry. This has been, and continues to be, a very active area of research. Multisymplectic geometry is the study of a smooth manifold of arbitrary dimension equipped with a nondegenerate closed $(n+1)$ form. This field grew from the fact that many results in symplectic geometry depend only on the fact that the symplectic form is nondegenerate and closed. Background references for this area include, e. g., \cite{CCI}, \cite{CIdL2}. Further, multisymplectic geometry provides a natural setting for many questions in classical Hamiltonian mechanics, e. g., \cite{CIdL1}, \cite{PR} as well as questions regarding string theory \cite{BHR}, \cite{BR}. Our work in this article then can been viewed as a specific case of multisymplectic geometry where we are considering a nondegenerate closed $3$-form on a $7$-dimensional manifold, and there is much to be gained by looking at this specific case as is hinted at in the current paper (see, e. g., Theorem $3.2$); moreover, these ideas have already led to several results not obtainable in the general multisymplectic setting and form the basis for a number of upcoming articles in $G_2$ and $Spin(7)$ geometry. Indeed, multisymplectic geometry is difficult because the assumptions are so general; on the other hand, the geometric structures associated with the exceptional Lie group $G_2$ are completely dependent on one another (see Section $2$) and so provide a rich setting for inquiry. Further, because of the large interest in M-theory, there is a need for results specifically related to $G_2$ geometry.

This paper consists of three sections: the first section is a review of ideas from symplectic geometry. We discuss symplectic and Hamiltonian vector fields and show that symplectic vector fields, Hamiltonian vector fields and smooth real-valued functions on $M$ all admit the structure of Lie algebras with Lie bracket on the symplectic and Hamiltonian vector fields induced from the Lie bracket structure on the space of all vector fields and the Lie bracket on smooth real-valued functions on $M$ given by a Poisson bracket; further, there is a Lie algebra anti-homomorphism between the Lie algebra of smooth real-valued functions on $M$ and the Lie algebra of the Hamiltonian vector fields.

We give a brief introduction to $G_2$ geometry in the second section and prove the following standard result regarding diffeomorphisms which preserve $G_2$ structures which has not previously appeared in the literature:

\begin{theo}
Let $(M_1,\vp_1)$ and $(M_2,\vp_2)$ be two manifolds with $G_2$-structures $\vp_1$, $\vp_2$ respectively. Let $\pi_i:M_1\t M_2\to M_i$ be the standard projection map, and define a $3$-form on the product manifold $M_1\t M_2$ by $\tilde{\vp}:=\pi_1^*\vp_1-\pi_2^*\vp_2$. A diffeomorphism $\Up:(M_1,\vp_1)\to(M_2,\vp_2)$ is a $G_2$-morphism if and only if $\tilde{\vp}|_{\Ga_{\Up}}\equiv 0$, where $\Ga_{\Up}:=\{(p,\Up(p))\in M_1\t M_2:p\in M_1\}$ is the graph of $\Up$ in $M_1\t M_2$.
\end{theo}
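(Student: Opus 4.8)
The plan is to mimic the classical symplectic argument that a diffeomorphism is a symplectomorphism precisely when its graph is Lagrangian in the product endowed with $\pi_1^*\om_1-\pi_2^*\om_2$; here the role of the Lagrangian condition is played by the vanishing of the nondegenerate $3$-form $\ti\vp$ on the graph. The key observation is that a $G_2$-morphism is a diffeomorphism $\Up$ with $\Up^*\vp_2=\vp_1$, and that the whole statement reduces to a single functoriality computation for pullbacks.

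First I would introduce the parametrization of the graph $\ga:M_1\ra M_1\t M_2$ defined by $\ga(p)=(p,\Up(p))$. Since $\Up$ is a diffeomorphism, $\ga$ is a smooth embedding whose image is exactly $\Ga_{\Up}$, and it is a diffeomorphism onto $\Ga_{\Up}$. Writing $\io:\Ga_{\Up}\hookrightarrow M_1\t M_2$ for the inclusion, we have $\ga=\io\ci\ga_0$ with $\ga_0:M_1\ra\Ga_{\Up}$ a diffeomorphism; hence $\ga^*\ti\vp=\ga_0^*(\io^*\ti\vp)=\ga_0^*(\ti\vp|_{\Ga_{\Up}})$. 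Because pullback along a diffeomorphism is injective on forms, the condition $\ti\vp|_{\Ga_{\Up}}\equiv 0$ is equivalent to $\ga^*\ti\vp=0$. This reduces the problem to computing $\ga^*\ti\vp$.

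Next I would carry out that computation using functoriality of the pullback together with the two identities $\pi_1\ci\ga=\mathrm{id}_{M_1}$ and $\pi_2\ci\ga=\Up$, which are immediate from the definition of $\ga$. Thus
\begin{equation*}
\ga^*\ti\vp=\ga^*\bigl(\pi_1^*\vp_1-\pi_2^*\vp_2\bigr)=(\pi_1\ci\ga)^*\vp_1-(\pi_2\ci\ga)^*\vp_2=\vp_1-\Up^*\vp_2.
\end{equation*}
Combining this with the previous paragraph, $\ti\vp|_{\Ga_{\Up}}\equiv 0$ holds if and only if $\vp_1=\Up^*\vp_2$, i.e. if and only if $\Up$ is a $G_2$-morphism, which is exactly the claim.

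Since the argument is a short chain of pullback identities, I expect no serious analytic or geometric obstacle. The only point requiring care is the bookkeeping in the first step: one must verify that restricting $\ti\vp$ to the submanifold $\Ga_{\Up}$ and pulling back by the global parametrization $\ga$ carry the same information, which rests on $\ga$ being a diffeomorphism onto its image. Note also that the argument never uses that $\vp_1,\vp_2$ are nondegenerate $G_2$-forms rather than arbitrary $3$-forms, nor that they are closed; the statement is purely pointwise and formal, so the sole input is the definition of a $G_2$-morphism as the equality $\Up^*\vp_2=\vp_1$.
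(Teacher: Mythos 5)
Your proof is correct and is essentially the same as the paper's: both parametrize the graph by $p\mapsto(p,\Up(p))$ and reduce the statement to the pullback computation $(\pi_1\ci\ga)^*\vp_1-(\pi_2\ci\ga)^*\vp_2=\vp_1-\Up^*\vp_2$. Your additional remarks (the careful identification of $\ti{\vp}|_{\Ga_{\Up}}\equiv 0$ with $\ga^*\ti{\vp}=0$, and the observation that neither nondegeneracy nor closedness of the $\vp_i$ is used) are sound refinements of the same argument.
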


In the third section, we define the analogues of Hamiltonian functions, Hamiltonian vector fields and symplectic vector fields given by Rochesterian $1$-forms, Rochesterian vector fields and $G_2$-vector fields respectively then prove the following results for Rochesterian vector fields:

\begin{theo}
There are no nontrivial Rochesterian vector fields on a closed manifold $M$ with closed $G_2$-structure $\vp$.
\end{theo}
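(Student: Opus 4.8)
The plan is to exploit a pointwise algebraic identity special to $G_2$-geometry together with Stokes' theorem. Recall that a Rochesterian vector field $X$ is, by definition, one for which $\io_X\vp=\d\be$ for some Rochesterian $1$-form $\be$; since $\vp$ is nondegenerate such an $X$ is uniquely determined by $\be$, and the goal is to show $X\equiv 0$ whenever $M$ is compact without boundary.

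First I would establish the pointwise linear-algebraic identity
\begin{equation}
\label{gtwoid}
(\io_X\vp)\w(\io_X\vp)\w\vp = 6\,|X|^2\,\vol,
\end{equation}
valid at each point of $M$ for every tangent vector $X$, where $\vol$ denotes the Riemannian volume form of $g_\vp$. Since $G_2$ preserves both $\vp$ and $g_\vp$, the left-hand side is a $G_2$-invariant quadratic form in $X$; because the standard $7$-dimensional representation is irreducible, this form must be a scalar multiple of $|X|^2$, and evaluating on a single unit vector in the model $\vp=e^{123}+e^{145}+e^{167}+e^{246}-e^{257}-e^{347}-e^{356}$ fixes the constant as $6$. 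The essential feature is that this constant is strictly positive, so the right-hand side is a nonnegative multiple of $\vol$ that vanishes precisely where $X$ does.

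Next I would substitute $\io_X\vp=\d\be$ and observe that the resulting top-degree form is exact. Using the hypothesis $\d\vp=0$ that the $G_2$-structure is closed together with $\d(\d\be)=0$, the Leibniz rule gives $\d(\be\w\d\be\w\vp)=\d\be\w\d\be\w\vp$, so that
\begin{equation}
\label{exact}
(\io_X\vp)\w(\io_X\vp)\w\vp = \d\be\w\d\be\w\vp = \d(\be\w\d\be\w\vp).
\end{equation}
Since $M$ is closed, Stokes' theorem gives $\int_M(\io_X\vp)\w(\io_X\vp)\w\vp=0$, and combining this with \eq{gtwoid} yields $6\int_M|X|^2\,\vol=0$. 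As the integrand is continuous and nonnegative, $|X|^2\equiv 0$, hence $X\equiv 0$, which is the assertion.

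I expect the main obstacle to be the verification of \eq{gtwoid} with the correct positive constant: one must fix conventions for $\vp$ and its induced metric consistently and carry out the contraction-and-wedge computation carefully, or else quote the relevant $G_2$ contraction identities recorded in Section $2$. Once \eq{gtwoid} is in hand, the exactness computation \eq{exact} and the appeal to Stokes' theorem are routine and use only that $\d\vp=0$ and that $M$ is compact without boundary; it is worth noting that coclosedness of $\vp$ is never needed, so the result holds for arbitrary closed $G_2$-structures.
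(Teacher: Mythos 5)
Your proposal is correct and follows essentially the same route as the paper: the identity $(X\lrcorner\vp)\w(X\lrcorner\vp)\w\vp=6\langle X,X\rangle\,\d vol$ (which in this paper is precisely the defining formula for the metric $g_\vp$, so no separate representation-theoretic verification is needed), the substitution $X\lrcorner\vp=\d\be$, the observation that $\d\be\w\d\be\w\vp=\d(\be\w\d\be\w\vp)$ using only $\d\vp=0$, and Stokes' theorem on the closed manifold to conclude $\|X\|_{L^2}=0$. The only cosmetic difference is that you re-derive the normalization constant by invariance and irreducibility, whereas the paper simply invokes its stated convention from Section 2.
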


\begin{theo}
Every Rochesterian vector field on a manifold $M$ with closed $G_2$-structure $\vp$ is a $G_2$-vector field. If every closed form in $\Om^2_7(M)$ is exact, then the spaces $\X_{Roc}(M)$ and $\X_{G_2}(M)$ coincide.
\end{theo}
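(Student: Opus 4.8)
The plan is to argue directly from the definitions, mirroring the symplectic prototype in which every Hamiltonian vector field is symplectic and the two classes coincide when $H^1(M)=0$. Recall that a Rochesterian vector field $X$ is one determined by a Rochesterian $1$-form $\al$ through the relation $\io_X\vp=\d\al$ (so that $\al$ is Rochesterian precisely when $\d\al\in\Om^2_7(M)$, which is what makes this relation solvable for $X$), while a $G_2$-vector field is characterized by $\L_X\vp=0$. First I would prove the inclusion $\X_{Roc}(M)\subseteq\X_{G_2}(M)$: given a Rochesterian $X$, Cartan's formula together with the closedness $\d\vp=0$ gives
\[
\L_X\vp=\d\io_X\vp+\io_X\d\vp=\d(\d\al)+0=0,
\]
so $X$ is a $G_2$-vector field. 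This already yields the first assertion of the theorem.

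For the converse under the cohomological hypothesis, the key observation I would isolate first is that $\io_X\vp\in\Om^2_7(M)$ for \emph{every} vector field $X$; indeed the map $X\mapsto\io_X\vp$ is exactly the standard pointwise isomorphism between $TM$ and the bundle whose sections comprise $\Om^2_7(M)$ recorded in Section $2$. Now take $X\in\X_{G_2}(M)$. From $\L_X\vp=0$ and $\d\vp=0$ we get, again by Cartan's formula, $\d(\io_X\vp)=0$, so $\io_X\vp$ is a closed form lying in $\Om^2_7(M)$. Invoking the hypothesis that every closed form in $\Om^2_7(M)$ is exact, we may write $\io_X\vp=\d\al$ for some $1$-form $\al$.

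It remains to check that $\al$ is genuinely a Rochesterian $1$-form and that $X$ is the associated Rochesterian vector field; this is the step I expect to require the most care, since a priori exactness only produces a primitive $\al$ with no control on $\d\al$. The point is that the control is automatic here: by construction $\d\al=\io_X\vp\in\Om^2_7(M)$, which is precisely the defining condition for $\al$ to be Rochesterian, and then $\io_X\vp=\d\al$ exhibits $X$ as the Rochesterian vector field determined by $\al$. Hence $X\in\X_{Roc}(M)$, giving $\X_{G_2}(M)\subseteq\X_{Roc}(M)$; combined with the inclusion above, the two spaces coincide. The whole argument thus hinges on the identification $\im(X\mapsto\io_X\vp)=\Om^2_7(M)$, which is what makes the hypothesis on $\Om^2_7(M)$ land exactly where it is needed.
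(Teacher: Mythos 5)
Your proposal is correct and follows essentially the same route as the paper: Cartan's formula with $\d\vp=0$ gives the inclusion $\X_{Roc}(M)\subseteq\X_{G_2}(M)$, and for the converse one notes that $X\lrcorner\vp$ is a closed form in $\Om^2_7(M)$ and applies the exactness hypothesis to produce a primitive $\al$, which is automatically Rochesterian. Your extra check that $\d\al=X\lrcorner\vp\in\Om^2_7(M)$ just makes explicit a step the paper leaves implicit.
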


\begin{coro}
If $H^2(M)=\{0\}$, then every $G_2$-vector field on a manifold with closed $G_2$-structure is a Rochesterian vector field.
\end{coro}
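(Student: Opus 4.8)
The plan is to derive this as an immediate consequence of the Theorem that precedes it, exploiting the fact that the hypothesis $H^2(M)=\{0\}$ is strictly stronger than the cohomological condition assumed there. The facts I would lean on are structural: for any vector field $X$ the contraction $\iota_X\vp$ always lies in the subspace $\Om^2_7(M)\subseteq\Om^2(M)$; a vector field is a $G_2$-vector field exactly when $\iota_X\vp$ is closed; and it is Rochesterian exactly when $\iota_X\vp$ is exact, with the primitive serving as the associated Rochesterian $1$-form. Since the inclusion $\X_{Roc}(M)\subseteq\X_{G_2}(M)$ is already established, only the reverse inclusion $\X_{G_2}(M)\subseteq\X_{Roc}(M)$ needs to be verified.

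First I would observe that $H^2(M)=\{0\}$ says every closed $2$-form on $M$ is exact; specializing to forms lying in $\Om^2_7(M)$ shows that every closed form in $\Om^2_7(M)$ is exact, which is exactly the hypothesis of the Theorem. Applying the Theorem gives $\X_{Roc}(M)=\X_{G_2}(M)$, and the corollary is immediate. To exhibit the mechanism directly, I would also unwind this: for $X\in\X_{G_2}(M)$, closedness of $\vp$ together with Cartan's formula gives $\L_X\vp=\d(\iota_X\vp)$, so $\iota_X\vp$ is a closed $2$-form; the vanishing of $H^2(M)$ then produces $\al\in\Om^1(M)$ with $\d\al=\iota_X\vp$, and $X$ is the Rochesterian vector field determined by $\al$.

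Because the argument reduces entirely to the preceding Theorem, I do not anticipate a genuine obstacle. The only step warranting attention is confirming that the primitive $\al$ produced by $H^2(M)=\{0\}$ is in fact a Rochesterian $1$-form and not merely an arbitrary $1$-form; this is automatic, since $\d\al=\iota_X\vp$ lies in $\Om^2_7(M)$ by the structural fact that the contraction map $X\mapsto\iota_X\vp$ has image in $\Om^2_7(M)$, which is precisely the defining condition for $\al$ to be Rochesterian.
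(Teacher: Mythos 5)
Your proposal is correct and follows exactly the route the paper intends: the hypothesis $H^2(M)=\{0\}$ makes every closed $2$-form exact, in particular every closed form in $\Om^2_7(M)$, so the preceding Theorem applies and yields $\X_{Roc}(M)=\X_{G_2}(M)$. Your additional check that the primitive $\al$ is automatically Rochesterian (since $\d\al=X\lrcorner\vp\in\Om^2_7(M)$) is a worthwhile point that the paper leaves implicit, but it is the same argument, not a different one.
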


We next show that the spaces of $G_2$- and Rochesterian vector fields admit the structure of Lie algebras with Lie bracket induced from the standard Lie bracket structure on the space of all vector fields and prove the following result on inclusions:

\begin{pro}
For any $G_2$-vector fields $X_1$, $X_2$, $[X_1,X_2]$ is a Rochesterian vector field with associated Rochesterian $1$-form given by $\vp(X_2,X_1,\cdot)$.
\end{pro}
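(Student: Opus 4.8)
The plan is to reduce everything to Cartan's magic formula together with the standard commutator identity $\iota_{[X,Y]}=\mathcal{L}_X\iota_Y-\iota_Y\mathcal{L}_X$ for the interior product. First I would record the two defining conditions in the forms most convenient for the computation. A vector field $X$ is a $G_2$-vector field precisely when $\mathcal{L}_X\vp=0$; since $\vp$ is closed, Cartan's formula gives $\mathcal{L}_X\vp=\d\,\iota_X\vp$, so this is equivalent to $\d(\iota_X\vp)=0$. A vector field is Rochesterian precisely when $\iota_X\vp$ is exact, the associated Rochesterian $1$-form being any $\al$ with $\iota_X\vp=\d\al$. Thus the whole task is to produce an explicit primitive for $\iota_{[X_1,X_2]}\vp$.

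The first step is to apply the commutator identity to $\vp$ with the roles $X=X_1$, $Y=X_2$:
$$\iota_{[X_1,X_2]}\vp=\mathcal{L}_{X_1}\iota_{X_2}\vp-\iota_{X_2}\mathcal{L}_{X_1}\vp.$$
Because $X_1$ is a $G_2$-vector field, $\mathcal{L}_{X_1}\vp=0$, so the second term drops out and we are left with $\iota_{[X_1,X_2]}\vp=\mathcal{L}_{X_1}\iota_{X_2}\vp$. The second step is to expand this surviving Lie derivative by Cartan's formula:
$$\mathcal{L}_{X_1}\iota_{X_2}\vp=\d\,\iota_{X_1}\iota_{X_2}\vp+\iota_{X_1}\d\,\iota_{X_2}\vp.$$
Since $X_2$ is also a $G_2$-vector field and $\vp$ is closed, $\d\,\iota_{X_2}\vp=\mathcal{L}_{X_2}\vp=0$, which kills the second term. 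Hence $\iota_{[X_1,X_2]}\vp=\d\bigl(\iota_{X_1}\iota_{X_2}\vp\bigr)$ is exact, so $[X_1,X_2]$ is Rochesterian. Unwinding the interior products gives $(\iota_{X_1}\iota_{X_2}\vp)(Y)=\vp(X_2,X_1,Y)$, so the associated Rochesterian $1$-form is $\vp(X_2,X_1,\cdot)$, exactly as claimed.

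I do not expect a genuine obstacle here: the computation is forced once the two $G_2$-conditions are in hand, and both are used (that of $X_1$ to discard $\iota_{X_2}\mathcal{L}_{X_1}\vp$, that of $X_2$ to discard $\iota_{X_1}\d\,\iota_{X_2}\vp$). The only points demanding care are bookkeeping ones. First, the commutator identity must be applied in the order $(X_1,X_2)$ so that the surviving $1$-form emerges as $\vp(X_2,X_1,\cdot)$ rather than its negative; the ordering of the arguments in the statement is precisely what the identity dictates. Second, for $\vp(X_2,X_1,\cdot)$ to qualify as a genuine Rochesterian $1$-form one must check that its exterior derivative lands in $\Om^2_7(M)$, but this is automatic, since $\d\bigl(\iota_{X_1}\iota_{X_2}\vp\bigr)=\iota_{[X_1,X_2]}\vp$ is a contraction of $\vp$ and every such contraction lies in $\Om^2_7(M)$.
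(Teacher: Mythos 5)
Your proof is correct and is essentially identical to the paper's: both apply the identity $[X_1,X_2]\lrcorner\vp=\L_{X_1}(X_2\lrcorner\vp)-X_2\lrcorner(\L_{X_1}\vp)$, kill the second term using $\L_{X_1}\vp=0$, expand the Lie derivative via Cartan's formula, and kill the remaining non-exact term using $\d(X_2\lrcorner\vp)=\L_{X_2}\vp=0$, arriving at $[X_1,X_2]\lrcorner\vp=\d(\vp(X_2,X_1,\cdot))$. Your closing observation that the resulting $2$-form automatically lies in $\Om^2_7(M)$, since it equals $[X_1,X_2]\lrcorner\vp$ and $\Om^2_7(M)=\{X\lrcorner\vp:X\in\X(M)\}$, is a point the paper leaves implicit, and it is a worthwhile addition.
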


Finally, we equip the space of Rochesterian $1$-forms with a bracket structure analogous to that of the Poisson bracket from symplectic geometry, show that it does \emph{not} satisfy the Jacobi identity, show that there is a linear transformation $\Phi$ of the \emph{vector spaces} of Rochesterian $1$-forms and Rochesterian vector fields and prove the following result regarding these structures:

\begin{theo}
\begin{enumerate}
    \item Given two Rochesterian $1$-forms $\al_1,\al_2\in\Om^1_{Roc}(M)$, $\{\al_1,\al_2\}\in\ker\Phi$ if and only if $\d\al_1$ is constant along the flow lines of $X_{\al_2}$ if and only if $\d\al_2$ is constant along the flow lines of $X_{\al_1}$.
    \item Let $\psi:(M,\vp)\to(M',\vp')$ be a diffeomorphism. Then $\psi$ is a $G_2$-morphism if and only if $\psi^*(\{\al,\be\})=\{\psi^*\al,\psi^*\be\}$ for all $\al,\be\in\Om^1_{Roc}(M')$.
\end{enumerate}
\end{theo}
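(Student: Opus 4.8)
Throughout I write the bracket as $\{\al_1,\al_2\}=\vp(X_{\al_2},X_{\al_1},\cdot)=\io_{X_{\al_1}}\io_{X_{\al_2}}\vp=\io_{X_{\al_1}}\d\al_2$, where $\io_{X_{\al_i}}\vp=\d\al_i$, and I take $\Phi\colon\Om^1_{Roc}(M)\to\X_{Roc}(M)$ to be the assignment $\al\mapsto X_\al$, so that $\ker\Phi=\{\al:\d\al=0\}$ is the space of closed Rochesterian $1$-forms. For (1) the plan is a one-line Cartan computation: since $\d\al_2$ is closed, the formula $\L_X=\d\io_X+\io_X\d$ gives $\d\{\al_1,\al_2\}=\d\io_{X_{\al_1}}\d\al_2=\L_{X_{\al_1}}\d\al_2$, where $\L$ denotes the Lie derivative. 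Hence $\{\al_1,\al_2\}\in\ker\Phi$ iff $\d\{\al_1,\al_2\}=0$ iff $\L_{X_{\al_1}}\d\al_2=0$, which is exactly the statement that $\d\al_2$ is constant along the flow of $X_{\al_1}$; the antisymmetry $\{\al_1,\al_2\}=-\{\al_2,\al_1\}$ (inherited from that of $\vp$) yields $\L_{X_{\al_1}}\d\al_2=-\L_{X_{\al_2}}\d\al_1$, so the two flow conditions are equivalent and (1) follows.

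For (2), recall that by the graph characterization $\psi$ is a $G_2$-morphism precisely when $\psi^*\vp'=\vp$. For the forward direction I would first check that $\psi^*$ transports Rochesterian data: using $\psi^*\d=\d\psi^*$ and the naturality $\psi^*\io_X=\io_{\psi^*X}\psi^*$ of the interior product under the diffeomorphism $\psi$, the computation $\io_{\psi^*X_\al}\vp=\io_{\psi^*X_\al}\psi^*\vp'=\psi^*\io_{X_\al}\vp'=\psi^*\d\al=\d\psi^*\al$ shows that $\psi^*\al\in\Om^1_{Roc}(M)$ with associated vector field $\psi^*X_\al$. The homomorphism property is then immediate: $\psi^*\{\al,\be\}=\psi^*\io_{X_\al}\io_{X_\be}\vp'=\io_{\psi^*X_\al}\io_{\psi^*X_\be}\vp=\{\psi^*\al,\psi^*\be\}$.

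The content is in the converse. Setting $\ti\vp:=\psi^*\vp'$, a second closed $G_2$-structure on $M$, I want to deduce $\ti\vp=\vp$. The left side of the hypothesis simplifies, exactly as above, to $\io_{\psi^*X_\al}\d b$ with $b=\psi^*\be$, while the right side is by definition $\io_{Y_a}\d b$, where $Y_a$ is the $\vp$-Rochesterian vector field of $a=\psi^*\al$ (so $\io_{Y_a}\vp=\d a$) and $\psi^*X_\al$ is its $\ti\vp$-Rochesterian vector field; equating them gives $\io_{Y_a-\psi^*X_\al}\d b=0$ for all $\al,\be\in\Om^1_{Roc}(M')$, and since $\d b=\io_{Y_b}\vp$ this reads $\vp(Y_b,\,Y_a-\psi^*X_\al,\,\cdot)=0$. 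The plan is to invoke nondegeneracy of $\vp$ twice: first, letting $b$ vary, conclude $Y_a=\psi^*X_\al$ for every $a$; then $\io_{Y_a}(\vp-\ti\vp)=\d a-\d a=0$, and letting $a$ vary gives $\vp=\ti\vp=\psi^*\vp'$, so $\psi$ is a $G_2$-morphism.

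Both applications of nondegeneracy require the key point, which I expect to be the main obstacle: that the Rochesterian vector fields span $T_pM$ at every point $p$. This is the analogue of the elementary fact that Hamiltonian vector fields span the tangent space in the symplectic case, but it is genuinely harder here because of the constraint $\d\al\in\Om^2_7$. I would establish it by constructing, near any $p$ and for any prescribed $v\in T_pM$, a local Rochesterian $1$-form $\al$ with $X_\al(p)=v$; on a contractible neighborhood, where closed forms in $\Om^2_7$ are exact and hence $\X_{Roc}=\X_{G_2}$ by the third Theorem, this amounts to solving $\L_X\vp=0$ with the prescribed initial value $X(p)=v$, after which a bump-function argument produces global forms on $M'$ adequate to test the hypothesis near $p$. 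Granting this spanning property, and noting that well-definedness of the right-hand bracket forces the pointwise $\Om^2_7$-subspaces determined by $\vp$ and $\ti\vp$ to agree, the two nondegeneracy steps go through and yield $\vp=\ti\vp$.
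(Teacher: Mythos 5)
Parts (1) and the forward implication of (2) in your proposal are correct and are essentially the paper's own arguments. For (1), the paper writes $\{\al_1,\al_2\}=X_{\al_2}\lrcorner\d\al_1=\L_{X_{\al_2}}\al_1-\d(X_{\al_2}\lrcorner\al_1)$ and takes $\d$ to get $\d\{\al_1,\al_2\}=\L_{X_{\al_2}}\d\al_1$; your one-line version $\d\{\al_1,\al_2\}=\L_{X_{\al_1}}\d\al_2$ is the same computation, except that your convention $\{\al_1,\al_2\}=\vp(X_{\al_2},X_{\al_1},\cdot)$ is the \emph{negative} of the paper's $\vp(X_{\al_1},X_{\al_2},\cdot)$ --- harmless here, since $\ker\Phi$ and the bracket-preservation hypothesis are stable under a global sign, but you should align the convention. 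Your forward half of (2), via naturality of $\psi^*$ and the interior product, is a cleaner coordinate-free version of the paper's pointwise computation, and your converse has the same skeleton as the paper's: equate the two contractions against $\d(\psi^*\be)$, conclude $X_{\psi^*\al}=\psi^*X_\al$, then conclude $\vp=\psi^*\vp'$ by contracting $\vp-\psi^*\vp'$ with Rochesterian vector fields.

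The gap is exactly where you say it is, but your proposed repair would not work. Both invocations of nondegeneracy need Rochesterian vector fields to be pointwise plentiful, and your plan --- solve $\L_X\vp=0$ on a contractible chart with prescribed value $X(p)=v$, then globalize by bump functions --- fails at both steps. First, any $G_2$-vector field is automatically a Killing field of $g_\vp$, because the metric is a natural invariant of $\vp$ (its flow preserves $\vp$, hence preserves $g_\vp$); germs of Killing fields form a finite-dimensional space which is zero for a generic metric, so for a generic closed $G_2$-structure the equation $\L_X\vp=0$ has \emph{no} nontrivial local solutions, let alone a $7$-parameter family realizing every prescribed $v$. Second, truncation destroys the defining condition: $\d(\chi\al)=\d\chi\w\al+\chi\,\d\al$, and $\d\chi\w\al$ is decomposable while every nonzero element of $\Om^2_7$ has rank six, so $\chi\al$ is essentially never Rochesterian. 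Thus the spanning property is not merely "the main obstacle''; as stated it is false in general, and some hypothesis guaranteeing enough Rochesterian $1$-forms is genuinely needed for the converse. It is only fair to add that the paper's own proof has the identical lacuna, silently: it passes from the hypothesis to $\d\psi^{-1}X_{\be}=X_{\psi^*\be}$, and from $X_{\psi^*\al}\lrcorner\vp=X_{\psi^*\al}\lrcorner\psi^*\vp'$ for all $\al$ to $\vp=\psi^*\vp'$, and both steps presuppose exactly the pointwise richness you flag. So your diagnosis is sharper than the paper's treatment, but your proposal does not close the hole either.
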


\begin{ack}
Many many thanks to several anonymous referees whose comments, corrections and suggestions helped to significantly improve this paper. We sincerely appreciate all of your insights, your help and your time.
\end{ack}

\section{Symplectic Vector Fields and Hamiltonian Vector Fields}
The material in this section comes from \cite{McSa} and \cite{daSi}. Let $(M,\om)$ be an arbitrary $2n$-dimensional manifold with a closed, nondegenerate $2$-form $\om$.

\begin{dfn}
\begin{enumerate}
    \item A vector field $X$ is called a \emph{symplectic vector field} if the flow induced by $X$ preserves the symplectic form $\om$, that is, $X$ is a symplectic vector field if and only if $\L_X\om=0$.
    \item A vector field $X$ is called a \emph{Hamiltonian vector field} if there exists a smooth real-valued function $H$ on $M$ such that $X\lrcorner\om=\d H$.
\end{enumerate}
\end{dfn}

Notice that since $\om$ is closed, we have, by the Cartan Formula, $\L_X\om=\d(X\lrcorner\om)+X\lrcorner\d\om=\d(X\lrcorner\om)$ for any vector field $X$, so $X$ is symplectic if and only if the $1$-form $X\lrcorner\om$ is closed. Hamiltonian vector fields always exist since $\om$ induces an isomorphism of smooth sections of the tangent bundle with smooth sections of the cotangent bundle: Given any smooth function $H$, $\d H$ is a covector field, so there exists a unique vector field $X_H$ such that $X_H\lrcorner\om=\d H$. An immediate consequence of these definitions is that a Hamiltonian vector field $X$ is always symplectic for if $X\lrcorner\om=\d H$ for some smooth real-valued function $H$, then $\d(X\lrcorner\om)=\d(\d H)=0$. The converse is not true in general; in fact, the obstruction for a symplectic vector field $X$ on $(M,\om)$ to be Hamiltonian is $H^1(M)$. Indeed, if $H^1(M)=\{0\}$, i. e., every closed $1$-form is exact, then for a symplectic vector field $X$, there exists a smooth real-valued function $H$ on $M$ such that $X\lrcorner\om=\d H$, that is, $X$ is a Hamiltonian vector field.

Let $\X(M)$ denote the space of vector fields on $M$, $\X_{symp}(M)$ the subspace of symplectic vector fields on $M$ and $\X_{Ham}(M)$ the subspace of Hamiltonian vector fields on $M$, and equip $\X(M)$ with the standard Lie bracket $[X,Y]=XY-YX$. Then:

\begin{prop}
If $X_1,X_2$ are symplectic vector fields, then the Lie bracket $[X_1,X_2]$ is a Hamiltonian vector field.
\end{prop}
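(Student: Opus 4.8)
The plan is to show directly that the $1$-form $[X_1,X_2]\lrcorner\om$ is exact, since by the discussion preceding the statement a vector field is Hamiltonian precisely when its contraction with $\om$ is exact. The engine of the computation is the standard Cartan-calculus identity expressing the interior product against a Lie bracket as a graded commutator of the Lie derivative and interior product, namely
\[
[X_1,X_2]\lrcorner\om=\L_{X_1}(X_2\lrcorner\om)-X_2\lrcorner(\L_{X_1}\om).
\]

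First I would invoke that $X_1$ is symplectic, so $\L_{X_1}\om=0$, which immediately kills the second term and leaves $[X_1,X_2]\lrcorner\om=\L_{X_1}(X_2\lrcorner\om)$. Next I would apply the Cartan formula to this remaining Lie derivative,
\[
\L_{X_1}(X_2\lrcorner\om)=\d\bigl(X_1\lrcorner X_2\lrcorner\om\bigr)+X_1\lrcorner\,\d(X_2\lrcorner\om).
\]
The second term here vanishes because $X_2$ is symplectic: indeed $\d(X_2\lrcorner\om)=\L_{X_2}\om=0$, using once more that $\om$ is closed. What remains is the exact form $\d(X_1\lrcorner X_2\lrcorner\om)=\d(\om(X_2,X_1))$, so $[X_1,X_2]$ is Hamiltonian with Hamiltonian function $H=\om(X_2,X_1)$.

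The computation is short and its substance is entirely contained in the two appearances of the closedness and symplectic conditions; the only point requiring care is bookkeeping of signs and conventions in the interior products, in particular verifying $X_1\lrcorner X_2\lrcorner\om=\om(X_2,X_1)$ and confirming the sign in the commutator identity so that the resulting Hamiltonian is correctly identified. There is no serious obstacle: no cohomological input is needed here, since the bracket is produced as something manifestly exact rather than merely closed, which is exactly why symplecticity of the two inputs upgrades to the Hamiltonian property of the bracket without any hypothesis on $H^1(M)$.
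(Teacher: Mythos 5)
Your proposal is correct and follows precisely the paper's own argument: the commutator identity $[X_1,X_2]\lrcorner\om=\L_{X_1}(X_2\lrcorner\om)-X_2\lrcorner(\L_{X_1}\om)$, the vanishing of the second term by symplecticity of $X_1$, Cartan's formula with $\d(X_2\lrcorner\om)=\L_{X_2}\om=0$, and the identification of the Hamiltonian function as $\om(X_2,X_1)$. No gaps or discrepancies to report.
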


\begin{proof}
Recall that for an arbitrary differential form $\tau$ we have $[X,Y]\lrcorner\tau=\L_X(Y\lrcorner\tau)-Y\lrcorner(\L_X\tau)$, so
\begin{equation*}
\begin{split}
[X_1,X_2]\lrcorner\om&=\L_{X_1}(X_2\lrcorner\om)-X_2\lrcorner(\underbrace{\L_{X_1}\om}_{=0}) \\
&=\L_{X_1}(X_2\lrcorner\om)=\d(X_1\lrcorner X_2\lrcorner\om)+X_1\lrcorner(\underbrace{\d(X_2\lrcorner\om)}_{=0}) \\
&=\d(\om(X_2,X_1))
\end{split}
\end{equation*}
Hence $[X_1,X_2]$ is a Hamiltonian vector field with generating Hamiltonian function $\om(X_2,X_1)$.
\end{proof}

\begin{cor}
The subspaces $\X_{symp}(M)$ and $\X_{Ham}(M)$ of $\X(M)$ are closed under the Lie bracket operation inherited from $\X(M)$; hence, there are the following inclusions of \emph{Lie} algebras: $$(\X_{Ham}(M),[\cdot,\cdot])\subseteq(\X_{symp}(M),[\cdot,\cdot])\subseteq(\X(M),[\cdot,\cdot]).$$
\end{cor}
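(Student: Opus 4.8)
The plan is to leverage the Proposition directly, together with the two facts already established in the surrounding text: that every Hamiltonian vector field is symplectic, and that the Lie derivative satisfies $\L_{[X_1,X_2]}=\L_{X_1}\L_{X_2}-\L_{X_2}\L_{X_1}$ when acting on forms. The statement breaks into three tasks: verifying that each of $\X_{symp}(M)$ and $\X_{Ham}(M)$ is a vector subspace of $\X(M)$, verifying that each is closed under the Lie bracket, and then observing that the resulting brackets make the displayed chain a chain of Lie-algebra inclusions.

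First I would check the vector-subspace property, since the phrase ``closed under the bracket'' only makes sense once we know we are restricting to an honest subspace. For symplectic fields this is immediate from the linearity of $X\mapsto\L_X\om$: if $\L_{X_1}\om=\L_{X_2}\om=0$ then $\L_{aX_1+bX_2}\om=a\L_{X_1}\om+b\L_{X_2}\om=0$. For Hamiltonian fields, if $X_i\lrcorner\om=\d H_i$, then $(aX_1+bX_2)\lrcorner\om=\d(aH_1+bH_2)$, so the linear combination is again Hamiltonian with generating function $aH_1+bH_2$.

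Next I would establish closedness under the bracket. For $\X_{Ham}(M)$: two Hamiltonian fields are in particular symplectic, so the Proposition yields that $[X_1,X_2]$ is Hamiltonian (with generating function $\om(X_2,X_1)$), hence $\X_{Ham}(M)$ is bracket-closed. For $\X_{symp}(M)$: given symplectic $X_1,X_2$, the Proposition gives that $[X_1,X_2]$ is Hamiltonian, and since every Hamiltonian field is symplectic—as noted just before the Proposition—it follows that $[X_1,X_2]$ is symplectic; alternatively one computes $\L_{[X_1,X_2]}\om=\L_{X_1}\L_{X_2}\om-\L_{X_2}\L_{X_1}\om=0$ directly. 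The inclusion $\X_{Ham}(M)\subseteq\X_{symp}(M)$ is precisely the statement that Hamiltonian fields are symplectic, and $\X_{symp}(M)\subseteq\X(M)$ is trivial.

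To upgrade these to inclusions of \emph{Lie} algebras there is essentially nothing further to verify: the ambient bracket on $\X(M)$ is bilinear, antisymmetric and satisfies the Jacobi identity, and all three properties pass verbatim to the restriction of the bracket to any bracket-closed vector subspace. Thus $(\X_{Ham}(M),[\cdot,\cdot])$ and $(\X_{symp}(M),[\cdot,\cdot])$ are Lie subalgebras and the displayed chain consists of Lie-algebra inclusions. I expect no genuine obstacle here; the substantive computation is the one already carried out in the Proposition, and the only point requiring attention is confirming the vector-subspace closure so that the bracket restriction is well posed.
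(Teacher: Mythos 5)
Your proposal is correct and follows essentially the same route as the paper: the Corollary there is stated as an immediate consequence of the Proposition (bracket of symplectic fields is Hamiltonian) together with the previously noted fact that Hamiltonian fields are symplectic, which is exactly the chain of implications you spell out. Your additional checks (the vector-subspace property and the alternative computation via $\L_{[X_1,X_2]}\om=\L_{X_1}\L_{X_2}\om-\L_{X_2}\L_{X_1}\om$) are fine but only make explicit what the paper leaves implicit.
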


We now focus on the real-valued smooth functions on $M$, $C^{\infty}(M)$. For $f\in C^{\infty}(M)$, the assignment $f\mapsto X_f$ where $X_f$ is the associated Hamiltonian vector field is linear. Given $f,g\in C^{\infty}(M)$, then $$(X_f+X_g)\lrcorner\om=(X_f\lrcorner\om)+(X_g\lrcorner\om)=\d f+\d g=\d(f+g)=X_{f+g}\lrcorner\om,$$ so that by nondegeneracy of $\om$, we have $X_{f+g}=X_f+X_g$. Similarly, $X_{af}=aX_f$. We now equip $C^{\infty}(M)$ with a bracket operation as follows: For $f,g\in C^{\infty}(M)$, define $\{f,g\}=\om(X_f,X_g)\in C^{\infty}(M)$. Consider the Hamiltonian vector field $X_{\{f,g\}}$: $$X_{\{f,g\}}\lrcorner\om=X_{\om(X_f,X_g)}\lrcorner\om=([X_g,X_f])\lrcorner\om,$$ so that $X_{\{f,g\}}=-[X_f,X_g]$.

\begin{prop}
The bracket $\{\cdot,\cdot\}$ on $C^{\infty}(M)$ satisfies the Jacobi identity.
\end{prop}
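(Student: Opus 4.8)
The plan is to exploit the two facts already in hand: every Hamiltonian vector field is symplectic, so $\L_{X_f}\om=0$ for all $f$, and the relation $X_{\{f,g\}}=-[X_f,X_g]$ derived just above. The strategy is to view the inner bracket $\{g,h\}=\om(X_g,X_h)$ as a smooth function, differentiate it along $X_f$ via the Leibniz rule for the Lie derivative, and convert the derivatives of the Hamiltonian vector fields that appear back into Poisson brackets.

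First I would record the scalar identities that follow from $X_f\lrcorner\om=\d f$: evaluating $\om(X_f,\cdot)=\d f$ on $X_g$ gives $\{f,g\}=\om(X_f,X_g)=X_g(f)=-X_f(g)$, so that $\{f,\cdot\}$ acts on functions as the derivation $-X_f$; in particular $\{f,\{g,h\}\}=-X_f(\{g,h\})$. The antisymmetry $\{a,b\}=-\{b,a\}$ is likewise immediate from that of $\om$. The central computation is then to expand $X_f(\{g,h\})=X_f(\om(X_g,X_h))$. Since $\om(X_g,X_h)$ is a function, $X_f$ and $\L_{X_f}$ agree on it, and the Leibniz rule for the Lie derivative of the $2$-form $\om$ paired with $X_g,X_h$ gives
\begin{equation*}
X_f(\om(X_g,X_h))=(\L_{X_f}\om)(X_g,X_h)+\om([X_f,X_g],X_h)+\om(X_g,[X_f,X_h]).
\end{equation*}
The first term vanishes because $X_f$ is symplectic. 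Substituting $[X_f,X_g]=-X_{\{f,g\}}$ and $[X_f,X_h]=-X_{\{f,h\}}$ and recognizing each remaining $\om(\cdot,\cdot)$ as a Poisson bracket yields $X_f(\{g,h\})=-\{\{f,g\},h\}-\{g,\{f,h\}\}$, hence $\{f,\{g,h\}\}=\{\{f,g\},h\}+\{g,\{f,h\}\}$.

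Finally I would rewrite this in cyclic form using antisymmetry: $\{\{f,g\},h\}=-\{h,\{f,g\}\}$ and $\{g,\{f,h\}\}=-\{g,\{h,f\}\}$, so the last display becomes $\{f,\{g,h\}\}+\{g,\{h,f\}\}+\{h,\{f,g\}\}=0$, which is exactly the Jacobi identity.

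There is no conceptual obstacle here: closedness of $\om$ has already been absorbed into the identity $X_{\{f,g\}}=-[X_f,X_g]$ and into $\L_{X_f}\om=0$, so the argument is essentially one careful application of the Leibniz rule. The only place demanding genuine attention is the bookkeeping of signs—both in the two forms $\{f,g\}=X_g(f)=-X_f(g)$ and in the sign of $X_{\{f,g\}}=-[X_f,X_g]$—since a single slip propagates through the whole computation and flips the final cyclic sum. As an alternative I could instead feed the triple $(X_f,X_g,X_h)$ into the invariant formula for $\d\om$ and set $\d\om=0$; this reaches the same cyclic identity in a single step, but requires the same vigilance with signs.
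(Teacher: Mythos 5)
Your proof is correct, and it takes a genuinely different route from the paper's. The paper argues by direct expansion: it writes the full cyclic sum in terms of interior products, converts each $X_{\{\cdot,\cdot\}}$ using $X_{\{f,g\}}=-[X_f,X_g]$, applies the commutation identity $[X,Y]\lrcorner\tau=\L_X(Y\lrcorner\tau)-Y\lrcorner(\L_X\tau)$ to the $1$-form $\d h$ together with Cartan's formula, and verifies that all terms cancel. You instead apply the derivation property of $\L_{X_f}$ to the contraction $\om(X_g,X_h)$, so closedness of $\om$ enters once and visibly through $\L_{X_f}\om=0$, and the identity drops out immediately in its Leibniz form $\{f,\{g,h\}\}=\{\{f,g\},h\}+\{g,\{f,h\}\}$, after which antisymmetry gives the cyclic form. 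Your sign conventions ($\{f,g\}=X_g(f)=-X_f(g)$ and $X_{\{f,g\}}=-[X_f,X_g]$) match the paper's, so the bookkeeping is consistent, and your version is shorter and makes the role of each hypothesis more transparent. What the paper's longer expansion buys is that the very same computation is reused in Section~3 for the bracket on Rochesterian $1$-forms, where the cancellation fails and leaves exactly the defect term $\d(X_{\al}\lrcorner X_{\be}\lrcorner\d\ga)$; your argument does not transfer to that setting, because there $\{\be,\ga\}=\vp(X_{\be},X_{\ga},\cdot)$ is a $1$-form rather than a function, so the key step identifying $\{f,\cdot\}$ with the derivation $-X_f$ on functions has no analogue. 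In other words, your proof is the cleaner one for the symplectic proposition itself, while the paper's is structured to expose precisely how the Jacobi identity breaks in the $G_2$ case.
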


\begin{proof}
Let $f,g,h\in C^{\infty}(M)$ with associated Hamiltonian vector fields $X_f$, $X_g$ and $X_h$ respectively. Then we have the following:
\begin{equation*}
\begin{split}
\{f&,\{g,h\}\}+\{g,\{h,f\}\}+\{h,\{f,g\}\}=\{f,\{g,h\}\}-\{g,\{f,h\}\}-\{\{f,g\},h\}\\
&=X_f\lrcorner X_{\{g,h\}}\lrcorner\om-X_g\lrcorner X_{\{f,h\}}-X_{\{f,g\}}\lrcorner X_h\lrcorner\om\\
&=X_f\lrcorner\d\{g,h\}-X_g\lrcorner\d\{f,h\}+[X_f,X_g]\lrcorner\d h\\
&=X_f\lrcorner\d(X_h\lrcorner X_g\lrcorner\om)-X_g\lrcorner\d(X_h\lrcorner X_f\lrcorner\om)+[X_f,X_g]\lrcorner\d h\\
&=-X_f\lrcorner\d(X_g\lrcorner X_h\lrcorner\om)+X_g\lrcorner\d(X_f\lrcorner X_h\lrcorner\om)+[X_f,X_g]\lrcorner\d h\\
&=-X_f\lrcorner\d(X_g\lrcorner\d h)+X_g\lrcorner\d(X_f\lrcorner\d h)+[X_f,X_g]\lrcorner\d h\\
&=-X_f\lrcorner\d(X_g\lrcorner\d h)+X_g\lrcorner\d(X_f\lrcorner\d h)+\L_{X_f}(X_g\lrcorner\d h)-X_g\lrcorner(\L_{X_f}\d h)\\
&=-X_f\lrcorner\d(X_g\lrcorner\d h)+X_g\lrcorner\d(X_f\lrcorner\d h)+X_f\lrcorner\d(X_g\lrcorner\d h)+\underbrace{\d(X_f\lrcorner X_g\lrcorner\d h)}_{=0}\underbrace{-X_g\lrcorner(X_f\lrcorner\d\d h)}_{=0}\\
&-X_g\lrcorner\d(X_f\lrcorner\d h)\\
&=X_f\lrcorner\d(X_g\lrcorner\d h)-X_f\lrcorner\d(X_g\lrcorner\d h)+X_g\lrcorner\d(X_f\lrcorner\d h)-X_g\lrcorner\d(X_f\lrcorner\d h)=0
\end{split}
\end{equation*}
\end{proof}

Hence, $(C^{\infty}(M),\{\cdot,\cdot\})$ is a Lie algebra, and there is a Lie algebra anti-homomorphism $\Psi:(C^{\infty}(M),\{\cdot,\cdot\})\to(\X_{Ham},[\cdot,\cdot])$ given by $f\mapsto X_f$. Assume that $\Psi(f)=X_f=0$, then $0=X_f\lrcorner\om=\d f$ implies that $f$ is locally constant (or constant if $M$ is connected); therefore, $\ker\Psi=\{\text{(locally) constant functions on }M\}$, so every Hamiltonian vector field is defined by a smooth real-valued function on $M$ which is unique up to the addition of a locally constant smooth function.

\begin{thm}
\begin{enumerate}
    \item For $f,g\in C^{\infty}(M)$, $\{f,g\}=0$ if and only if $f$ is constant along the integral curves determined by $X_g$ if and only if $g$ is constant along the integral curves determined by $X_f$.
    \item Let $\psi:(M,\om)\to(M',\om')$ be a diffeomorphism. Then $\psi$ is a symplectomorphism if and only if $\{f,g\}\circ\psi=\{f\circ\psi,g\circ\psi\}$ for all $f,g\in C^{\infty}(M')$.
\end{enumerate}
\end{thm}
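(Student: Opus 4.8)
For part (1) the plan is a short pointwise computation from the definitions. Pairing the defining relation $X_f\lrcorner\om=\d f$ against $X_g$, and using antisymmetry of $\om$ together with $X_g\lrcorner\om=\d g$, I would record the two identities
\[
\{f,g\}=\om(X_f,X_g)=\d f(X_g)=-\,\d g(X_f).
\]
Next I would note that along any integral curve $\ga$ of $X_g$ one has $\tfrac{\d}{\d t}(f\ci\ga)=\d f(X_g)\ci\ga$, so $f$ is constant along every integral curve of $X_g$ exactly when $\d f(X_g)\equiv0$; by the first identity this is equivalent to $\{f,g\}=0$. Running the same argument with the roles of $f$ and $g$ reversed, now using $-\,\d g(X_f)$, gives the third equivalence. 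There is no real obstacle here.

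For part (2) the organizing idea is to isolate one naturality statement and then feed it to both implications. Writing $\{\cdot,\cdot\}'$ for the Poisson bracket of $\om'$ on $M'$, I claim that for every diffeomorphism $\psi\colon M\to M'$,
\[
\{f,g\}'\ci\psi=\{\psi^*f,\psi^*g\}_{\psi^*\om'}\qquad(f,g\in C^{\iy}(M')),
\]
where the right-hand bracket is formed on $M$ from the \emph{pulled-back} form $\psi^*\om'$. To prove this I would first check that pullback intertwines Hamiltonian vector fields: applying the general identity $\psi^*\big((\psi_*Y)\lrcorner\al\big)=Y\lrcorner\psi^*\al$ with $\al=\om'$ and $\psi_*Y=X_f$, and using nondegeneracy, shows that $(\psi^{-1})_*X_f$ is precisely the $\psi^*\om'$-Hamiltonian vector field of $\psi^*f$. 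Substituting this into $(\psi^*\om')\big((\psi^{-1})_*X_f,(\psi^{-1})_*X_g\big)$ and unwinding the definition of $\psi^*\om'$ returns $\om'(X_f,X_g)\ci\psi$, which is the claim.

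Granting the naturality identity, the forward implication is immediate: if $\psi$ is a symplectomorphism then $\psi^*\om'=\om$, so the right-hand bracket is literally $\{\psi^*f,\psi^*g\}$ and the stated compatibility holds. For the converse, combining the hypothesis with the identity gives $\{F,G\}_{\psi^*\om'}=\{F,G\}_{\om}$ for all $F,G\in C^{\iy}(M)$, since $\psi^*$ is a bijection on smooth functions. Thus everything reduces to the lemma that a symplectic form is determined by its Poisson bracket, and this is the step I expect to be the main obstacle. I would argue it pointwise: fix $p\in M$ and let $\sharp$ denote the inverse of $u\mapsto\om_p(u,\cdot)$, so that $(X_H)_p=\sharp\,\d H_p$ and $\{F,G\}(p)=\langle\d F_p,\sharp\,\d G_p\rangle$. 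Since $\om_p$ is nondegenerate every covector occurs as some $\d H_p$, so choosing $F,G$ with prescribed differentials at $p$ shows that equality of the brackets forces equality of the maps $\sharp$ associated to $\om_p$ and to $(\psi^*\om')_p$; inverting, the two forms agree at $p$. As $p$ is arbitrary this yields $\psi^*\om'=\om$, i.e.\ $\psi$ is a symplectomorphism. The crux is recognizing that the Poisson bracket at a point is exactly the bivector inverse to $\om_p$, so that it remembers $\om_p$ completely.
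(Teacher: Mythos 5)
Your proposal is correct, and much of it tracks the paper closely: for part (1) the paper makes the same computation, differentiating $f$ along the flow $\psi_t$ of $X_g$ to get $\frac{\d}{\d t}(f\ci\psi_t)=\psi_t^*\{f,g\}$, and for the forward implication of part (2) the paper proves exactly your intertwining statement $(X_{f\ci\psi})_p=\d\psi^{-1}_{\psi(p)}(X_f)_{\psi(p)}$ (there under the assumption $\psi^*\om'=\om$) and then expands the bracket. The genuine divergence is in the converse of part (2). The paper works directly from the hypothesis: it rewrites $\{f,g\}\ci\psi=(\d\psi^{-1}X_g)(f\ci\psi)$ and $\{f\ci\psi,g\ci\psi\}=X_{g\ci\psi}(f\ci\psi)$, so that the hypothesis forces the vector fields $\d\psi^{-1}X_g$ and $X_{g\ci\psi}$ to agree as derivations on functions and hence to be equal; feeding this equality into a pointwise computation yields $X_{f\ci\psi}\lrcorner\om=X_{f\ci\psi}\lrcorner\psi^*\om'$ for every $f$, and $\om=\psi^*\om'$ follows because such Hamiltonian vectors span each tangent space. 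You instead prove an unconditional naturality identity, $\{\psi^*f,\psi^*g\}_{\psi^*\om'}=\{f,g\}'\ci\psi$ for any diffeomorphism, use it to convert the hypothesis into the statement that $\om$ and $\psi^*\om'$ induce the same Poisson bracket on $M$, and finish with the lemma that a symplectic form is recovered from its bracket via the inverse bivector $\sharp$. Both converses ultimately rest on the same nondegeneracy fact --- every covector at $p$ is $\d F_p$ for some $F$, so Hamiltonian directions exhaust $T_pM$ --- but your packaging isolates a reusable principle (two symplectic forms with the same Poisson bracket coincide, so any bracket-preserving diffeomorphism is symplectic), at the cost of carrying the auxiliary bracket $\{\cdot,\cdot\}_{\psi^*\om'}$, whereas the paper stays with the given brackets and obtains along the way the geometric fact that a bracket-preserving diffeomorphism already carries Hamiltonian vector fields to Hamiltonian vector fields before one knows it is symplectic.
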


\begin{proof}
\begin{enumerate}
    \item We show only the first equivalence since the second equivalence follows similarly. Let $\psi_t$ denote the integral curves generated by $X_g$. The result then follows immediately from the following calculation.
    \begin{equation*}
    \begin{split}
    \frac{\d}{\d t}(f\circ\psi_t)&=\psi_t^*\L_{X_g}f=\psi_t^*(X_g\lrcorner\d f) \\
    &=\psi_t^*(X_g\lrcorner X_f\lrcorner\om)=\psi_t^*\om(X_f,X_g)=\psi_t^*\{f,g\}
    \end{split}
    \end{equation*}
    \item Assume first that $\psi$ is a symplectomorphism. Note that for $p\in M$, we have the maps
    \begin{equation*}
    \begin{split}
    &\d\psi_p:T_pM\to T_{\psi(p)}M'\\
    &\psi^*_p:T^*_{\psi(p)}M'\to T^*_pM\\
    &\d\psi^{-1}_{\psi(p)}=(\d\psi_p)^{-1}:T_{\psi(p)}M'\to T_pM
    \end{split}
    \end{equation*}
    Since $\psi^*\om'=\om$ and $(\psi^{-1})^*\om=\om'$, we have the following equivalent equations:
    \begin{equation*}
    \begin{split}
    &\om_p(\cdot,\cdot)=\psi^*_p(\om'_{\psi(p)})(\cdot,\cdot)=\om'_{\psi(p)}(\d\psi_p\cdot,\d\psi_p\cdot)\\
    &\om'_{\psi(p)}(\cdot,\cdot)=(\psi^{-1}_{\psi(p)})^*(\om_p)(\cdot,\cdot)=\om_p(\d\psi^{-1}_{\psi(p)}\cdot,\d\psi^{-1}_{\psi(p)}\cdot)
    \end{split}
    \end{equation*}
    \noindent
    For a function $f\in C^{\infty}(M')$ and vector field $Y\in\X(M)$, we now calculate
    \begin{equation*}
    \begin{split}
    (X_{f\circ\psi}\lrcorner\om)_p(Y_p)&=\d(f\circ\psi)_p(Y_p)=\d f_{\psi(p)}(\d\psi_pY_p)=\psi^*_p(\d f_{\psi(p)})(Y_p)\\ &=\psi^*_p((X_f\lrcorner\om')_{\psi(p)})(Y_p)=\psi^*_p(\om'_{\psi(p)}((X_f)_{\psi(p)},\cdot))(Y_p)\\
    &=\om'_{\psi(p)}((X_f)_{\psi(p)},\d\psi_pY_p)\\
    &=\om_p(\d\psi^{-1}_{\psi(p)}(X_f)_{\psi(p)},\d\psi^{-1}_{\psi(p)}(\d\psi_pY_p))\\
    &=\om_p((\d\psi^{-1})_{\psi(p)}(X_f)_{\psi(p)},Y_p)\\
    \end{split}
    \end{equation*}
    that is, $(X_{f\circ\psi})_p=(\d\psi^{-1})_{\psi(p)}(X_f)_{\psi(p)}$. Hence we find that
    \begin{equation*}
    \begin{split}
    (\{f,g\}\circ\psi)(p)&=\om'(X_f,X_g)(\psi(p))\\
    &=\om'_{\psi(p)}((X_f)_{\psi(p)},(X_g)_{\psi(p)})\\ &=\om_p(\d\psi^{-1}_{\psi(p)}(X_f)_{\psi(p)},\d\psi^{-1}_{\psi(p)}(X_g)_{\psi(p)})\\
    &=\om_p((X_{f\circ\psi})_p,(X_{g\circ\psi})_p)\\
    &=\om(X_{f\circ\psi},X_{g\circ\psi})(p)=\{f\circ\psi,g\circ\psi\}(p)
    \end{split}
    \end{equation*}
    Conversely, assume that $\{f,g\}\circ\psi=\{f\circ\psi,g\circ\psi\}$ for all $f,g\in C^{\infty}(M')$. Then, for any $f,g\in C^{\infty}(M')$ we have 
    \begin{equation*}
    \begin{split}
    \{f,g\}\circ\psi&=\om'(X_f,X_g)\circ\psi=(X_g\lrcorner X_f\lrcorner\om')\circ\psi=(X_g\lrcorner\d f)\circ\psi\\
    &=(\d f(X_g))\circ\psi=\psi^*(\d f(X_g))=\psi^*(X_gf)=(\d\psi^{-1}X_g)(f\circ\psi)\\
    \end{split}
    \end{equation*}
    and
    \begin{equation*}
    \begin{split}
    \{f\circ\psi,g\circ\psi\}&=\om(X_{f\circ\psi},X_{g\circ\psi})=X_{g\circ\psi}\lrcorner\d(f\circ\psi)\\
    &=\d(f\circ\psi)(X_{g\circ\psi})=X_{g\circ\psi}(f\circ\psi)
    \end{split}
    \end{equation*}
    which, by our hypothesis, yields that $\d\psi^{-1}X_g=X_{g\circ\psi}$ for any $g\in C^{\infty}(M')$. Then for any $f\in C^{\infty}(M')$ and any vector field $Y\in\X(M)$,
    \begin{equation*}
    \begin{split}
    (X_{f\circ\psi}\lrcorner\om)_p(Y_p)&=\d(f\circ\psi)_p(Y_p)=\d(\psi^*f)_p(Y_p)=(\psi^*\d f)_p(Y_p)\\
    &=\psi^*_p(X_f\lrcorner\om')_p(Y_p)=\om'_{\psi(p)}((X_f)_{\psi(p)},\d\psi_pY_p)\\
    &=\om'_{\psi(p)}(\d\psi_p(\d\psi^{-1}_{\psi(p)}(X_f)_{\psi(p)}),\d\psi_pY_p)\\
    &=(\psi^*\om')_{\psi(p)}(\d\psi^{-1}_{\psi(p)}(X_f)_{\psi(p)},Y_p)\\
    &=(\psi^*\om')_{\psi(p)}((X_{f\circ\psi})_p,Y_p)
    \end{split}
    \end{equation*}
    \noindent
    Thus $X_{f\circ\psi}\lrcorner\om=X_{f\circ\psi}\lrcorner\psi^*\om'$ which implies that $\om=\psi^*\om'$ as desired.
\end{enumerate}
\end{proof}

\section{$G_2$ Geometry}
If we consider coordinates $(x_1,\ldots,x_7)$ on $\R^7$, we can define a $3$-form $\vp_0$ by $$\vp_0=\d x^{123}+\d x^{145}+\d x^{167}+\d x^{246}-\d x^{257}-\d x^{347}-\d x^{356}.$$ From this $3$-form, we get an induced metric and orientation by the formula $$(X\lrcorner\vp_0)\w(Y\lrcorner\vp_0)\w\vp_0=6<X,Y>_{\vp_0}\d vol_{\vp_0}$$ for vector fields $X,Y\in \X(\R^7)$. Then, using this metric, we can define a $2$-fold vector cross product of $X$ and $Y$ as the unique vector field $X\t Y$ satisfying $<X\t Y,Z>_{\vp_0}=\vp_0(X,Y,Z)$ for all $Z\in\X(\R^7)$. This metric then gives the associated Hodge star from which we get the dual of $\vp_0$ given by the $4$-form $$\star\vp_0=\d x^{4567}+\d x^{2367}+\d x^{2345}+\d x^{1357}-\d x^{1346}-\d x^{1256}-\d x^{1247}.$$ Here, we have started with the $3$-form and have shown how to define the other structures in terms of it; a fundamental fact of $G_2$-geometry however is that given \emph{any one} of $\vp_0$, $\star\vp_0$, $\t$ or $<,>$, we can always define the other structures. References for this information and an equivalent formulation of these structures arising from the octonions include \cite{BrGr, Br1, FeGr, Gray, HaLa, Jo, Kari}.

\begin{dfn}
A manifold $M$ is said to have a \emph{$G_2$-structure} if there is a $3$-form $\vp$ such that $(T_pM,\vp)\cong(\R^7,\vp_0)$ as vector spaces for every point $p\in M$. This is equivalent to a reduction of the tangent frame bundle from $GL(7,\R)$ to the Lie group $G_2$. If $\d\vp=0$, then the $G_2$-structure is said to be \emph{closed}.
\end{dfn}

\begin{rem}
Because of the inclusion $G_2$ in $SO(7)$, all manifolds with $G_2$-structure are necessarily orientable; in addition, it can be shown that all manifolds with $G_2$-structure are spin, and any $7$-manifold with spin structure admits a $G_2$-structure.
\end{rem}

A natural geometric requirement is that $\vp$ be constant with respect to the Levi-Civita connection of the $G_2$-metric $g_{\vp}$ defined by $\vp$. In this case, the holonomy of $(M,\vp)$ is a subgroup of $G_2$, and $(M,\vp)$ is called a \emph{$G_2$-manifold}. The condition that $\na\vp=0$ is equivalent to $\d\vp=0$ and $\d^*\vp=0$ where $\d^*$ is the adjoint operator to the exterior derivative with respect to the Hodge star associated to the $G_2$-metric $g_{\vp}$. Fernandez and Gray \cite{FeGr} show that manifolds with closed $G_2$-structure and $G_2$-manifolds are just $2$ of $16$ types of $G_2$-structures on manifolds.

\begin{dfn}
Let $(M_1,\vp_1)$ and $(M_2,\vp_2)$ be $7$-manifolds with $G_2$-structures. If $\Up:M_1 \to M_2$ is a diffeomorphism such that $\Up^*(\vp_2)=\vp_1$, then $\Up$ is called a \emph{$G_2$-morphism} and $(M_1,\vp_1)$, $(M_2,\vp_2)$ are said to be \emph{$G_2$-morphic}.
\end{dfn}
\noindent
Notice that given a $G_2$-morphism $\Up:(M_1,\vp_1)\to(M_2,\vp_2)$, $\d\vp_1=0$ if and only if $\d\vp_2=0$ since $\d$ commutes with pullback maps.

Let $(M_1,\vp_1)$ and $(M_2,\vp_2)$ be two $7$-dimensional manifolds with $G_2$-structures. Let $M_1\t M_2$ be the standard Cartesian product of $M_1$ and $M_2$ with canonical projection maps $\pi_i:M_1\t M_2\to M_i$. Define a $3$-form $\vp=\pi_1^*\vp_1+\pi_2^*\vp_2$. If both $\vp_1$ and $\vp_2$ are closed, then this form is also closed; in fact, for any $a_1,a_2\in\R$, $a_1\pi_1^*\vp_1+a_2\pi_2^*\vp_2$ defines a (closed) $3$-form on $M_1\t M_2$. Taking $a_1=1$ and $a_2=-1$, we have the (closed) $3$-form $\tilde{\vp}=\pi_1^*\vp_1-\pi_2^*\vp_2$.

\begin{thm}
A diffeomorphism $\Up:(M_1,\vp_1)\to(M_2,\vp_2)$ is a $G_2$-morphism if and only if $\tilde{\vp}|_{\Ga_{\Up}}\equiv 0$, where $\Ga_{\Up}:=\{(p,\Up(p))\in M_1\t M_2:p\in M_1\}$.
\end{thm}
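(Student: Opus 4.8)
The plan is to prove both implications by working with the graph $\Ga_{\Up}$ as an embedded submanifold of $M_1\t M_2$ and pulling back $\ti\vp$ to $M_1$ via a natural parametrization. The key observation is that $\Ga_{\Up}$ is the image of the embedding $j:M_1\to M_1\t M_2$ given by $j(p)=(p,\Up(p))$, which is a diffeomorphism onto $\Ga_{\Up}$. Since the restriction $\ti\vp|_{\Ga_{\Up}}$ vanishes identically if and only if its pullback $j^*\ti\vp$ vanishes on $M_1$ (as $j$ is an embedding), the entire statement reduces to computing $j^*\ti\vp$ and showing it equals zero precisely when $\Up^*\vp_2=\vp_1$.

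First I would compute $j^*\ti\vp$ directly. Writing $j=(\id_{M_1},\Up)$ and using $\ti\vp=\pi_1^*\vp_1-\pi_2^*\vp_2$ together with the functoriality of pullback, I get
\begin{equation*}
j^*\ti\vp=j^*\pi_1^*\vp_1-j^*\pi_2^*\vp_2=(\pi_1\ci j)^*\vp_1-(\pi_2\ci j)^*\vp_2.
\end{equation*}
The composite $\pi_1\ci j$ is simply $\id_{M_1}$, since $\pi_1(p,\Up(p))=p$, while $\pi_2\ci j=\Up$, since $\pi_2(p,\Up(p))=\Up(p)$. Therefore
\begin{equation*}
j^*\ti\vp=\vp_1-\Up^*\vp_2.
\end{equation*}
This is the crux of the argument, and it makes both directions immediate.

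From here the two implications follow at once. If $\Up$ is a $G_2$-morphism, then by definition $\Up^*\vp_2=\vp_1$, so $j^*\ti\vp=\vp_1-\vp_1=0$, and since $j$ is an embedding onto $\Ga_{\Up}$ this is equivalent to $\ti\vp|_{\Ga_{\Up}}\equiv0$. Conversely, if $\ti\vp|_{\Ga_{\Up}}\equiv0$, then $j^*\ti\vp=0$, whence $\vp_1-\Up^*\vp_2=0$, i.e. $\Up^*\vp_2=\vp_1$, which is exactly the condition that $\Up$ be a $G_2$-morphism.

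The only point requiring slight care is the equivalence between the vanishing of the restriction $\ti\vp|_{\Ga_{\Up}}$ as a form on the submanifold $\Ga_{\Up}$ and the vanishing of the pullback $j^*\ti\vp$ on $M_1$; I expect this to be the main (though minor) obstacle, since one must be careful that ``restriction to $\Ga_{\Up}$'' is interpreted as the pullback along the inclusion $\iota:\Ga_{\Up}\hookrightarrow M_1\t M_2$, and then note that $\iota\ci(\text{the diffeomorphism }M_1\to\Ga_{\Up})=j$, so that $j^*\ti\vp=0$ if and only if $\iota^*\ti\vp=0$ because the intermediate map is a diffeomorphism inducing an isomorphism on all form spaces. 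Once this identification is made explicit, the proof is essentially the single computation above.
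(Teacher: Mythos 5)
Your proposal is correct and follows essentially the same route as the paper: the paper also parametrizes $\Ga_{\Up}$ by the embedding $\tilde{\Up}(p)=(p,\Up(p))$ (your $j$), computes $\tilde{\Up}^*\tilde{\vp}=(\pi_1\circ\tilde{\Up})^*\vp_1-(\pi_2\circ\tilde{\Up})^*\vp_2=\vp_1-\Up^*\vp_2$, and reads off the equivalence. Your extra care about identifying ``restriction to $\Ga_{\Up}$'' with pullback along the inclusion, and why the diffeomorphism $M_1\to\Ga_{\Up}$ makes the two vanishing conditions equivalent, is a point the paper leaves implicit, but it is the same argument.
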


\begin{proof}
The submanifold $\Ga_{\Up}$ is the embedded image of $M_1$ in $M_1\t M_2$ with embedding given by $\tilde{\Up}:M_1\to M_1\t M_2$ $\tilde{\Up}(p)=(p,\Up(p))$. Then $\tilde{\vp}|_{\Ga_{\Up}}=0$ if and only if $$0=\tilde{\Up}^*\tilde{\vp}= \tilde{\Up}^*\pi_1^*\vp_1-\tilde{\Up}^*\pi_2^*\vp_2$$ $$=(\pi_1\circ\tilde{\Up})^*\vp_1-(\pi_2\circ\tilde{\Up})^*\vp_2=(id_{M_1})^*\vp_1-\Up^*\vp_2=\vp_1-\Up^*\vp_2.$$
\end{proof}

\section{$G_2$ Vector Fields, Rochesterian $1$-Forms and Rochesterian Vector Fields}
Let $M$ be a $7$-manifold with a $G_2$-structure. Recall that there is an action of the Lie group $G_2$ on the algebra of differential forms on $M$ from which we obtain decompositions of each space of $k$-forms on $M$ into irreducible $G_2$-representations. In particular, we can decompose the space of $2$-forms into the direct sum of a seven-dimensional representation and a fourteen-dimensional representation, denoted from here on by $\Om^2_{7}$ and $\Om^2_{14}$ respectively; it is well known that $\Om^2_{7}=\{X\lrcorner\vp:X\in\X(M)\}$ where $\X(M)$ is the space of vector fields on $M$ (see for example \cite{FeGr, Jo, Kari, Sa}).

\begin{dfn}
Let $(M,\vp)$ be a manifold with closed $G_2$-structure.
\begin{enumerate}
    \item We define a \emph{Rochesterian $1$-form} $\al$ to be any $1$-form on $M$ such that $\d\al\in\Om^2_7(M)$. We denote the set of Rochesterian $1$-forms on $M$ by $\Om^1_{Roc}(M)$.
    \item For a Rochesterian $1$-form $\al$, the vector field $X_{\al}$ satisfying $X_{\al}\lrcorner\vp=\d\al$ will be called a \emph{Rochesterian vector field}, and the set of Rochesterian vector fields on $M$ will be denoted by $\X_{Roc}(M)$.
    \item A vector field $X$ is called a \emph{$G_2$-vector field} if the flow induced by $X$ preserves the $G_2$-structure; equivalently, $X$ is a $G_2$-vector field if $\L_X\vp=0$. $\X_{G_2}(M)$ will denote the set of all $G_2$-vector fields.
\end{enumerate}
\end{dfn}

That $\X_{G_2}$, $\Om^1_{Roc}$ and $\X_{Roc}$ are vector spaces follows immediately from the linearity properties of $\d$ and the interior product. Now, as in the symplectic case, we have the useful fact that $X$ is a $G_2$-vector field if and only if $\d(X\lrcorner\vp)=0$ since $\d\vp=0$ implies that $\L_X\vp=\d(X\lrcorner\vp)+X\lrcorner\d\vp=\d(X\lrcorner\vp)$. In contrast to the symplectic case, the map $\tilde{\vp}:\X(M)\to\Om^2(M)$ given by $\tilde{\vp}(X)=X\lrcorner\vp$ cannot be an isomorphism; however, by the nondegeneracy condition on the $3$-form $\vp$, we do have that $\tilde{\vp}$ is injective, so for a given Rochesterian $1$-form $\al$, the associated Rochesterian vector field $X_{\al}$ is unique. Indeed, the existence of nontrivial Rochesterian vector fields, and hence Rochesterian $1$-forms, is quite a bit more delicate than that of Hamiltonian vector fields as is seen as a consequence of the following theorem (see \cite[Theorem 2.4]{ACS} for the original form of the statement and proof of this theorem):

\begin{thm}
Let $M$ be a closed manifold, and let $\vp$ be a closed $G_2$-structure on $M$. Then $X\lrcorner\vp$ is exact if and only if $X$ is the zero vector field; therefore, there are no nontrivial Rochesterian vector fields on a closed manifold $M$ with closed $G_2$-structure $\vp$.
\end{thm}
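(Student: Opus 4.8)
The forward direction is immediate: if $X=0$ then $X\lrcorner\vp=0=\d 0$ is exact. For the converse, suppose $X\lrcorner\vp$ is exact, say $X\lrcorner\vp=\d\be$ for some $\be\in\Om^1(M)$, and I want to conclude $X\equiv 0$. The plan is to integrate a pointwise-nonnegative multiple of $|X|^2$ over $M$ and show that exactness of $X\lrcorner\vp$, together with closedness of $\vp$, forces this integral to vanish.

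The key pointwise input is the defining formula for the $G_2$-metric recorded in Section~2: since $(T_pM,\vp)\cong(\R^7,\vp_0)$ at every $p\in M$, setting $Y=X$ in that identity gives, for every vector field $X$,
\[
(X\lrcorner\vp)\w(X\lrcorner\vp)\w\vp=6\,\langle X,X\rangle_{\vp}\,\d\vol_{\vp}=6\,|X|^2\,\d\vol_{\vp}.
\]
Integrating over $M$ and substituting $X\lrcorner\vp=\d\be$ yields
\[
\int_M 6\,|X|^2\,\d\vol_{\vp}=\int_M(X\lrcorner\vp)\w(X\lrcorner\vp)\w\vp=\int_M\d\be\w\d\be\w\vp .
\]
The crucial step is to observe that the top-degree form $\d\be\w\d\be\w\vp$ is exact, and this is exactly where closedness of the $G_2$-structure enters. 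Writing $\eta:=\d\be\w\vp$, I would compute $\d\eta=\d\d\be\w\vp+\d\be\w\d\vp=0$ (using $\d\vp=0$), and hence $\d(\be\w\eta)=\d\be\w\eta-\be\w\d\eta=\d\be\w\d\be\w\vp$. Thus the integrand is $\d(\be\w\d\be\w\vp)$, so Stokes' theorem on the closed manifold $M$ gives $\int_M\d\be\w\d\be\w\vp=0$. Therefore $\int_M|X|^2\,\d\vol_{\vp}=0$; since the integrand is nonnegative and $\d\vol_{\vp}$ is a genuine volume form, $|X|^2\equiv 0$, i.e. $X\equiv 0$. (Alternatively, one concludes first that $X\lrcorner\vp\equiv 0$ and then invokes the injectivity of $X\mapsto X\lrcorner\vp$, which follows from nondegeneracy of $\vp$ and was noted above.)

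I expect the main obstacle here to be bookkeeping rather than conceptual: keeping the signs straight in the Stokes computation and confirming that the only structural hypothesis actually used is $\d\vp=0$. The essential idea—pairing $X\lrcorner\vp$ with $\vp$ to manufacture a positive multiple of $|X|^2$, then discarding it via Stokes—works precisely because $\vp$ is closed; without $\d\vp=0$ the form $\d\be\w\d\be\w\vp$ need not be exact and the argument breaks down, consistent with the statement being genuinely special to closed $G_2$-structures. It is also worth noting that, since $\vp$ being nondegenerate and closed is all that is used, the same argument adapts verbatim to the injectivity statement and underlies the nonexistence of nontrivial Rochesterian vector fields asserted in the theorem.
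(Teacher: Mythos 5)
Your proof is correct and follows essentially the same route as the paper's: the pointwise identity $(X\lrcorner\vp)\w(X\lrcorner\vp)\w\vp=6\langle X,X\rangle_{\vp}\,\d\vol_{\vp}$, the observation that $\d\be\w\d\be\w\vp=\d(\be\w\d\be\w\vp)$ because $\d\vp=0$, and Stokes' theorem on the closed manifold to force $\|X\|_{L^2}=0$. Your sign bookkeeping in the Stokes step is also correct, so nothing needs to be fixed.
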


\begin{proof}
If $X$ is the zero vector field, then $X\lrcorner\vp=0$ at every point; in this case, we can take $X\lrcorner\vp=\d f$ where $f:M\to \R$ is the constant function $f(p)=0$ for all $p\in M$. Conversely, assume that $X$ is an arbitrary vector field such that $X\lrcorner\vp$ is exact. Then there exists some $1$-form $\al$ such that $X\lrcorner\vp=\d\al$. Using the $G_2$-metric defined by $\vp$, we have that
\begin{equation*}
\begin{split}
6\langle X, X\rangle dvol_M&=(X\lrcorner\vp)\w(X\lrcorner\vp)\w\vp \\
&=\d\al\w\d\al\w\vp=\d(\al\w\d\al\w\vp)\\
\end{split}
\end{equation*}
From here, we find that, since $\pd M=\emptyset$, an application of Stokes' Theorem yields
\begin{equation*}
\begin{split}
0&\leq 6||X||^2_{L^2}vol(M)=\int_M6\langle X, X\rangle dvol_M \\
&=\int_M\d(\al\w\d\al\w\vp)=\int_{\pd M}\al\w\d\al\w\vp=0
\end{split}
\end{equation*}
Since $vol(M)\neq 0$, we must have that $||X||_{L^2}=0$ proving that $X=0$ as desired.
\end{proof}

\begin{rem}
As pointed out by an anonymous referee, there is another instance of nonexistence, this time, for $G_2$ vector fields. Specifically, in the case of a compact torsion-free $G_2$-structure, i. e., a $G_2$-manifold, $G_2$ vector fields, being by definition Killing vector fields, will be parallel since $G_2$-manifolds are Ricci flat. In the case of a nontrivial $G_2$ vector field, there would necessarily be a reduction in the holonomy to a proper subgroup of $G_2$. Thus there are no nontrivial $G_2$ vector fields on a compact irreducible $G_2$-manifold.
\end{rem}

Hence, we assume from now on that $M$ is either noncompact or that $M$ is compact with nonempty boundary; if $M$ happens to be a compact $G_2$-manifold with nonempty boundary, we will further assume that $M$ is reducible. As for specific examples of Rochesterian vector fields, we first have the trivial $G_2$-manifold $(\R^7,\vp_0)$ where simple calculations show that every coordinate vector field is a Rochesterian vector field. For a second, nontrivial example, we first recall from \cite{CST} that if $X$ is a $3$-dimensional manifold, then $(T^*X\t\R,\vp=\Re\Om+\om\w\d t)$ is a $7$-manifold with closed $G_2$-structure where $\Om$ is a certain complex $3$-form and $\om$ is the tautological $2$-form on $T^*X$; then the vector field $\frac{\pd}{\pd t}$ is Rochesterian with an associated Rochesterian $1$-form given by the tautological $1$-form $\al$ on $T^*X$ (see \cite{CST} for more information on this construction).

\begin{thm}
Every Rochesterian vector field on a manifold $M$ with closed $G_2$-structure $\vp$ is a $G_2$-vector field. If every closed form in $\Om^2_7(M)$ is exact, then the spaces $\X_{Roc}(M)$ and $\X_{G_2}(M)$ coincide.
\end{thm}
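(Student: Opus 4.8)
The plan is to prove the two assertions separately, establishing the set equality in the second part as a pair of reverse inclusions, one of which is precisely the first assertion.

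For the first statement, I would argue directly from the definitions. Let $X_\al$ be a Rochesterian vector field with associated Rochesterian $1$-form $\al$, so that $X_\al\lrcorner\vp=\d\al$ by definition. Applying the exterior derivative and using $\d^2=0$ gives $\d(X_\al\lrcorner\vp)=\d\d\al=0$. Since $\vp$ is closed, the Cartan formula collapses to $\L_{X_\al}\vp=\d(X_\al\lrcorner\vp)+X_\al\lrcorner\d\vp=\d(X_\al\lrcorner\vp)=0$, exactly as observed for $G_2$-vector fields immediately after the definition. Hence $X_\al$ is a $G_2$-vector field, and this already yields the inclusion $\X_{Roc}(M)\subseteq\X_{G_2}(M)$, which holds with no hypothesis on cohomology.

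For the second statement, under the assumption that every closed form in $\Om^2_7(M)$ is exact, it remains only to prove the reverse inclusion $\X_{G_2}(M)\subseteq\X_{Roc}(M)$. Given a $G_2$-vector field $X$, I would first note that $X\lrcorner\vp\in\Om^2_7(M)$, since $\Om^2_7=\{Y\lrcorner\vp:Y\in\X(M)\}$, and that $X\lrcorner\vp$ is closed because $\L_X\vp=\d(X\lrcorner\vp)=0$. Thus $X\lrcorner\vp$ is a closed form lying in $\Om^2_7(M)$, so by hypothesis there is a $1$-form $\al$ with $\d\al=X\lrcorner\vp$. The key verification is that $\al$ qualifies as a Rochesterian $1$-form, but this is immediate, since $\d\al=X\lrcorner\vp\in\Om^2_7(M)$ is exactly the defining condition. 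Finally, because the map $\tilde\vp:X\mapsto X\lrcorner\vp$ is injective by nondegeneracy of $\vp$, the relation $X\lrcorner\vp=\d\al=X_\al\lrcorner\vp$ forces $X=X_\al$, so $X$ is the Rochesterian vector field determined by $\al$.

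The argument is essentially a bookkeeping exercise in the definitions, so I expect no serious analytic obstacle; the only point requiring care is the logical role of the exactness hypothesis. The inclusion $\X_{Roc}\subseteq\X_{G_2}$ uses only $\d^2=0$, whereas the reverse inclusion genuinely needs the cohomological assumption to manufacture a potential $1$-form $\al$ from the closed $2$-form $X\lrcorner\vp$. The subtlest point is confirming that the primitive $\al$ is automatically Rochesterian rather than merely an arbitrary $1$-form; this holds for free because $\d\al$ equals $X\lrcorner\vp$, and it is precisely where the restriction in the hypothesis to $\Om^2_7$ (rather than to all closed $2$-forms) does its work, which is also what renders the subsequent corollary for $H^2(M)=\{0\}$ a clean special case.
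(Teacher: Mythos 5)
Your proposal is correct and follows essentially the same route as the paper's own (much terser) proof: the first inclusion via $\d^2=0$ and the Cartan formula, and the reverse inclusion by noting that $X\lrcorner\vp$ is a closed form in $\Om^2_7(M)$, hence exact by hypothesis, with its primitive automatically a Rochesterian $1$-form. Your added remarks on the injectivity of $\tilde\vp$ and on exactly where the exactness hypothesis enters are correct elaborations of steps the paper leaves implicit.
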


\begin{proof}
The first statement follows immediately from the definitions. Next, for a $G_2$-vector field $X$, $X\lrcorner\vp\in\Om^2_7(M)$ is closed, so, by assumption, there exists a $1$-form $\al$ with $X\lrcorner\vp=\d\al$.
\end{proof}

\begin{cor}
If $H^2(M)=\{0\}$, then every $G_2$-vector field on a manifold with closed $G_2$-structure is a Rochesterian vector field.
\end{cor}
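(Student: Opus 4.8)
The plan is to recognize that this corollary is an immediate specialization of the preceding theorem, since the cohomological hypothesis $H^2(M)=\{0\}$ is just a stronger form of the condition ``every closed form in $\Om^2_7(M)$ is exact'' appearing there. So the entire content of the statement is already carried by that theorem, and the only thing to check is that the global vanishing of $H^2$ implies the exactness condition on $\Om^2_7(M)$.

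First I would recall that a vector field $X$ is a $G_2$-vector field precisely when $\L_X\vp=0$, which, because $\vp$ is closed, is equivalent by the Cartan formula to $\d(X\lrcorner\vp)=0$. Thus for any $G_2$-vector field $X$ the $2$-form $X\lrcorner\vp$ is closed; moreover, by the identification $\Om^2_7(M)=\{Y\lrcorner\vp:Y\in\X(M)\}$ recalled at the start of this section, it lies in $\Om^2_7(M)$. Next I would invoke the hypothesis: the condition $H^2(M)=\{0\}$ says that every closed $2$-form on $M$ is exact, and since $X\lrcorner\vp$ is in particular a closed $2$-form, there is a $1$-form $\al$ with $X\lrcorner\vp=\d\al$. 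But then $\d\al=X\lrcorner\vp\in\Om^2_7(M)$, so $\al$ is by definition a Rochesterian $1$-form and $X=X_{\al}$ is its associated Rochesterian vector field, whence $X\in\X_{Roc}(M)$.

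Equivalently, and more economically, I would simply note that $H^2(M)=\{0\}$ forces every closed form in $\Om^2_7(M)$ to be exact, because $\Om^2_7(M)\subseteq\Om^2(M)$ and a closed element of the subspace is in particular a closed $2$-form on $M$; the second assertion of the preceding theorem then applies verbatim and yields $\X_{Roc}(M)=\X_{G_2}(M)$, of which the claimed inclusion $\X_{G_2}(M)\subseteq\X_{Roc}(M)$ is a part. There is essentially no obstacle here: the lone (and trivial) point to verify is precisely this inclusion of spaces of forms, so that the cohomological hypothesis is logically stronger than the exactness condition used earlier. The result is therefore a direct corollary rather than an independent argument.
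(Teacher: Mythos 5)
Your proposal is correct and matches the paper's (implicit) reasoning exactly: the corollary is a direct specialization of the preceding theorem, since $H^2(M)=\{0\}$ means every closed $2$-form on $M$ is exact, hence in particular every closed form in the subspace $\Om^2_7(M)$ is exact, and the theorem then gives $\X_{Roc}(M)=\X_{G_2}(M)$. Both your expanded argument (tracing $X\lrcorner\vp$ through closedness, exactness, and the definition of a Rochesterian $1$-form) and your economical version are sound.
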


\begin{prop}
For any $G_2$-vector fields $X_1$, $X_2$, there exists a $1$-form $\al$ such that $[X_1,X_2]\lrcorner\vp=\d\al$.
\end{prop}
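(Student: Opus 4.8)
The plan is to transcribe the symplectic argument---the first Proposition of Section~1---essentially verbatim, replacing the symplectic form $\om$ by the closed $G_2$-form $\vp$. The starting point is the standard identity $[X,Y]\lrcorner\tau=\L_X(Y\lrcorner\tau)-Y\lrcorner(\L_X\tau)$ applied to $\tau=\vp$ with $X=X_1$ and $Y=X_2$. Since $X_1$ is a $G_2$-vector field we have $\L_{X_1}\vp=0$, so the second term drops out and we are left with $[X_1,X_2]\lrcorner\vp=\L_{X_1}(X_2\lrcorner\vp)$.

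Next I would expand via the Cartan formula, $\L_{X_1}(X_2\lrcorner\vp)=\d(X_1\lrcorner X_2\lrcorner\vp)+X_1\lrcorner\d(X_2\lrcorner\vp)$, and invoke the equivalence already recorded in this section---that a vector field $X$ is a $G_2$-vector field if and only if $\d(X\lrcorner\vp)=0$, which itself uses $\d\vp=0$. Applying this to $X_2$ annihilates the second term, leaving $[X_1,X_2]\lrcorner\vp=\d(X_1\lrcorner X_2\lrcorner\vp)$. Unwinding the interior products (with the same convention used for the symplectic computation, which gives $X_1\lrcorner X_2\lrcorner\om=\om(X_2,X_1)$) yields $X_1\lrcorner X_2\lrcorner\vp=\vp(X_2,X_1,\cdot)$, so the desired primitive is $\al=\vp(X_2,X_1,\cdot)$.

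To upgrade the bare existence statement to the stronger claim that $[X_1,X_2]$ is genuinely \emph{Rochesterian}, I would observe that $\d\al=[X_1,X_2]\lrcorner\vp$ lies in $\Om^2_7(M)$ automatically, since $\Om^2_7(M)=\{X\lrcorner\vp:X\in\X(M)\}$ and $[X_1,X_2]$ is itself a vector field. Hence $\al$ is a Rochesterian $1$-form, and by the injectivity of $X\mapsto X\lrcorner\vp$ (the nondegeneracy of $\vp$ recorded earlier) its unique associated Rochesterian vector field is exactly $[X_1,X_2]$.

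I do not anticipate a genuine obstacle: the computation uses only that $\vp$ is closed and that $\L_{X_i}\vp=0$, precisely the inputs available in the symplectic model, so no $G_2$-specific structure beyond closedness enters. The only points demanding care are bookkeeping ones---fixing the interior-product and sign conventions so that $X_1\lrcorner X_2\lrcorner\vp$ comes out as $\vp(X_2,X_1,\cdot)$ rather than its negative, and citing the equivalence $\L_X\vp=0\iff\d(X\lrcorner\vp)=0$ rather than re-deriving it in place.
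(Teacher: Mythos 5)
Your proposal is correct and follows the paper's own proof essentially line for line: the identity $[X,Y]\lrcorner\tau=\L_X(Y\lrcorner\tau)-Y\lrcorner(\L_X\tau)$, the vanishing $\L_{X_1}\vp=0$, the Cartan formula, and the vanishing $\d(X_2\lrcorner\vp)=0$ combine to give $[X_1,X_2]\lrcorner\vp=\d(\vp(X_2,X_1,\cdot))$, exactly as in the paper. Your closing observation that $\d\al$ lies in $\Om^2_7(M)$ automatically (so that $[X_1,X_2]$ is genuinely Rochesterian) is a point the paper leaves implicit in its final sentence, and making it explicit is a small improvement rather than a divergence.
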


\begin{proof}
\begin{equation*}
\begin{split}
[X_1,X_2]\lrcorner\vp&=\L_{X_1}(X_2\lrcorner\vp)-X_2\lrcorner(\underbrace{\L_{X_1}\vp}_{=0}) \\
&=\L_{X_1}(X_2\lrcorner\vp)=\d(X_1\lrcorner X_2\lrcorner\vp)+X_1\lrcorner(\underbrace{\d(X_2\lrcorner\vp)}_{=0})\\
&=\d(\vp(X_2,X_1,\cdot))
\end{split}
\end{equation*}
Thus, $[X_1,X_2]$ is a Rochesterian vector field with an associated $1$-form given by $\vp(X_2,X_1,\cdot)$.
\end{proof}
Thus, we have the following inclusions of \emph{Lie algebras}: $$(\X_{Roc}(M),[\cdot,\cdot])\subseteq(\X_{G_2}(M),[\cdot,\cdot])\subseteq(\X(M),[\cdot,\cdot]).$$

For a Rochesterian $1$-form $\al$, the assignment $\al\mapsto X_{\al}$ where $X_{\al}$ is the unique associated Rochesterian vector field is linear. We now equip $\Om^1_{Roc}(M)$ with a bracket as follows: for $\al,\be\in\Om^1_{Roc}(M)$, define $\{\al,\be\}=\vp(X_{\al},X_{\be},\cdot)$. Then $\{\al,\be\}\in \Om^1_{Roc}(M)$ with Rochesterian vector field given by $[X_{\be},X_{\al}]$ since $$\d(\{\al,\be\})=\d(\vp(X_{\al},X_{\be},\cdot))=[X_{\be},X_{\al}]\lrcorner\vp.$$

\begin{rem}
This bracket is a specific case of the \emph{semibracket} defined in \cite{BHR} for the general multisymplectic setting, and a proof of the following result in this more general setting can be found in \cite[Proposition 3.7]{BHR}.
\end{rem}

\begin{prop}
For any $\al,\be,\ga\in\Om^1_{Roc}(M)$,
$$\{\al,\{\be,\ga\}\}+\{\be,\{\ga,\al\}\}+\{\ga,\{\al,\be\}\}=\d(X_{\al}\lrcorner X_{\be}\lrcorner\d\ga)$$
\end{prop}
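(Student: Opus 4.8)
The plan is to collapse the cyclic sum to a single exact $1$-form by exploiting the two structural facts already in hand: that each Rochesterian vector field is a $G_2$-vector field (Theorem 3.5), so $\L_{X_\al}\vp=\L_{X_\be}\vp=\L_{X_\ga}\vp=0$, and that the Rochesterian vector field attached to a bracket is a commutator, $X_{\{\mu,\nu\}}=[X_\nu,X_\mu]$. First I would rewrite the three summands as contractions of $\vp$: using $\{\mu,\sigma\}=\vp(X_\mu,X_\sigma,\cdot)=X_\sigma\lrcorner\d\mu$ together with $X_{\{\be,\ga\}}=[X_\ga,X_\be]$ and its companions, one gets $\{\al,\{\be,\ga\}\}=\vp(X_\al,[X_\ga,X_\be],\cdot)$, so that the Jacobiator reads $\vp(X_\al,[X_\ga,X_\be],\cdot)+\vp(X_\be,[X_\al,X_\ga],\cdot)+\vp(X_\ga,[X_\be,X_\al],\cdot)$.

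Second comes the key reduction: differentiate $\{\al,\be\}=\vp(X_\al,X_\be,\cdot)$ along $X_\ga$. Applying the Leibniz rule for $\L_{X_\ga}$ to the function $\vp(X_\al,X_\be,W)$ and subtracting the term $\vp(X_\al,X_\be,[X_\ga,W])$ coming from $\L_{X_\ga}$ acting on the test field $W$, the term $(\L_{X_\ga}\vp)(X_\al,X_\be,W)$ drops out because $X_\ga$ is a $G_2$-vector field, leaving $\L_{X_\ga}\{\al,\be\}=\vp([X_\ga,X_\al],X_\be,\cdot)+\vp(X_\al,[X_\ga,X_\be],\cdot)$. Re-expressing these two terms through the commutator description of the bracket identifies them as $\{\be,\{\ga,\al\}\}+\{\al,\{\be,\ga\}\}$, so the Jacobiator collapses to $J=\L_{X_\ga}\{\al,\be\}+\{\ga,\{\al,\be\}\}$.

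Third, I would feed $\L_{X_\ga}\{\al,\be\}$ through Cartan's formula, $\L_{X_\ga}\{\al,\be\}=\d(X_\ga\lrcorner\{\al,\be\})+X_\ga\lrcorner\d\{\al,\be\}$. The first piece is the differential of the function $X_\ga\lrcorner\{\al,\be\}=\vp(X_\al,X_\be,X_\ga)$, which up to ordering is exactly $X_\al\lrcorner X_\be\lrcorner\d\ga$; this is the target right-hand side. For the second piece I would use the already-established identity $\d\{\al,\be\}=[X_\be,X_\al]\lrcorner\vp$, so that $X_\ga\lrcorner\d\{\al,\be\}=\vp([X_\be,X_\al],X_\ga,\cdot)=-\vp(X_\ga,[X_\be,X_\al],\cdot)=-\{\ga,\{\al,\be\}\}$. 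This cancels the leftover $\{\ga,\{\al,\be\}\}$ exactly, and the only surviving contribution is the claimed exact $1$-form.

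The main obstacle I anticipate is the sign-and-ordering bookkeeping, concentrated in two spots: verifying that the two terms of $\L_{X_\ga}\{\al,\be\}$ really recombine into $\{\al,\{\be,\ga\}\}+\{\be,\{\ga,\al\}\}$ (which rests on the antisymmetry of $\vp$ together with $X_{\{\mu,\nu\}}=[X_\nu,X_\mu]$), and confirming that the non-exact Cartan term cancels $\{\ga,\{\al,\be\}\}$ rather than doubling it. As an independent check of the final sign I would expand $0=\d\vp(X_\al,X_\be,X_\ga,W)$ by the Koszul formula and substitute the three relations $\L_{X_\al}\vp=\L_{X_\be}\vp=\L_{X_\ga}\vp=0$; every bracket term involving the test field $W$ cancels in pairs by the cyclic symmetry of $\vp$, and what remains is precisely $J(W)=W\big(\vp(X_\al,X_\be,X_\ga)\big)$, reproducing the same exact $1$-form and pinning down its sign.
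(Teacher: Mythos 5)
Your proposal is correct, and it is organized genuinely differently from the paper's proof. The paper argues by symmetric brute force, mirroring its symplectic Jacobi-identity computation: all three cyclic terms are rewritten as iterated contractions of $\vp$, the identity $[X_{\al},X_{\be}]\lrcorner\d\ga=\L_{X_{\al}}(X_{\be}\lrcorner\d\ga)-X_{\be}\lrcorner(\L_{X_{\al}}\d\ga)$ and Cartan's formula are applied, and a long chain of cancellations leaves the single exact term, which there arises as $\d\bigl(X_{\al}\lrcorner X_{\be}\lrcorner\d\ga\bigr)$ from $\L_{X_{\al}}$ acting on the $1$-form $X_{\be}\lrcorner\d\ga$. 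You instead isolate $\ga$ and prove the intermediate identity $\L_{X_{\ga}}\{\al,\be\}=\{\al,\{\be,\ga\}\}+\{\be,\{\ga,\al\}\}$ directly from the Leibniz rule and $\L_{X_{\ga}}\vp=0$ (a $G_2$-analogue of the statement that Hamiltonian flows act as derivations of the Poisson bracket); then a single application of Cartan's formula to $\L_{X_{\ga}}\{\al,\be\}$ produces the exact term $\d\bigl(X_{\ga}\lrcorner\{\al,\be\}\bigr)=\d\bigl(\vp(X_{\al},X_{\be},X_{\ga})\bigr)$ while $X_{\ga}\lrcorner\d\{\al,\be\}=-\{\ga,\{\al,\be\}\}$ cancels the remaining bracket. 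Your route is shorter, has a reusable intermediate lemma, and explains structurally why the Jacobiator is exact; the paper's keeps the three variables on an equal footing and stays closer to its symplectic template.

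One caveat, which you rightly flagged as the danger spot: with the standard innermost-first convention, $X_{\al}\lrcorner X_{\be}\lrcorner\d\ga=\d\ga(X_{\be},X_{\al})=\vp(X_{\ga},X_{\be},X_{\al})=-\vp(X_{\al},X_{\be},X_{\ga})$, so your answer matches the stated right-hand side only under the reading $X_{\al}\lrcorner X_{\be}\lrcorner\d\ga=\d\ga(X_{\al},X_{\be})$. This is not a defect of your argument: the paper's own proof is inconsistent on exactly this point (its first displayed equality requires one reading, its second the other, so all three terms silently flip sign), and your Koszul-formula cross-check resolves the ambiguity correctly, pinning the Jacobiator as $\d\bigl(\vp(X_{\al},X_{\be},X_{\ga})\bigr)$.
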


\begin{proof}
Let $\al,\be,\ga\in \Om^1_{Roc}(M)$ with associated Rochesterian vector fields $X_{\al}$, $X_{\be}$ and $X_{\ga}$ respectively. Then we have the following:
\begin{equation*}
\begin{split}
\{\al&,\{\be,\ga\}\}+\{\be,\{\ga,\al\}\}+\{\ga,\{\al,\be\}\}=\{\al,\{\be,\ga\}\}-\{\be,\{\al,\ga\}\}-\{\{\al,\be\},\ga\}\\
&=X_{\al}\lrcorner X_{\{\be,\ga\}}\lrcorner\vp-X_{\be}\lrcorner X_{\{\al,\ga\}}\lrcorner\vp-X_{\{\al,\be\}}\lrcorner X_{\ga}\lrcorner\vp\\
&=X_{\al}\lrcorner\d\{\be,\ga\}-X_{\be}\lrcorner\d\{\al,\ga\}+[X_{\al},X_{\be}]\lrcorner\d \ga\\
&=X_{\al}\lrcorner\d(X_{\ga}\lrcorner X_{\be}\lrcorner\vp)-X_{\be}\lrcorner\d(X_{\ga}\lrcorner X_{\al}\lrcorner\vp)+[X_{\al},X_{\be}]\lrcorner\d \ga\\
&=-X_{\al}\lrcorner\d(X_{\be}\lrcorner X_{\ga}\lrcorner\vp)+X_{\be}\lrcorner\d(X_{\al}\lrcorner X_{\ga}\lrcorner\vp)+[X_{\al},X_{\be}]\lrcorner\d \ga\\
&=-X_{\al}\lrcorner\d(X_{\be}\lrcorner\d \ga)+X_{\be}\lrcorner\d(X_{\al}\lrcorner\d \ga)+[X_{\al},X_{\be}]\lrcorner\d \ga\\
&=-X_{\al}\lrcorner\d(X_{\be}\lrcorner\d \ga)+X_{\be}\lrcorner\d(X_{\al}\lrcorner\d \ga)+\L_{X_{\al}}(X_{\be}\lrcorner\d \ga)-X_{\be}\lrcorner(\L_{X_{\al}}\d \ga)\\
&=-X_{\al}\lrcorner\d(X_{\be}\lrcorner\d \ga)+X_{\be}\lrcorner\d(X_{\al}\lrcorner\d \ga)+X_{\al}\lrcorner\d(X_{\be}\lrcorner\d \ga)\\
&+\d(X_{\al}\lrcorner X_{\be}\lrcorner\d \ga)\underbrace{-X_{\be}\lrcorner(X_{\al}\lrcorner\d\d \ga)}_{=0}-X_{\be}\lrcorner\d(X_{\al}\lrcorner\d \ga)\\
&=X_{\al}\lrcorner\d(X_{\be}\lrcorner\d \ga)-X_{\al}\lrcorner\d(X_{\be}\lrcorner\d \ga)+X_{\be}\lrcorner\d(X_{\al}\lrcorner\d \ga)-X_{\be}\lrcorner\d(X_{\al}\lrcorner\d \ga)+\d(X_{\al}\lrcorner X_{\be}\lrcorner\d \ga)\\
&=\d(X_{\al}\lrcorner X_{\be}\lrcorner\d \ga)
\end{split}
\end{equation*}
\end{proof}

While we do not have a Lie algebra structure on $\Om^1_{Roc}(M)$, we do, as noted above, still have a linear transformation $\Phi:\Om^1_{Roc}(M)\to\X_{Roc}(M)$. Assume that $\Phi(\al)=X_{\al}=0$, then $0=X_{\al}\lrcorner\vp=\d\al$ which implies that $\al$ is a closed $1$-form. Hence, Rochesterian vector fields are uniquely defined by their Rochesterian $1$-forms, up to the addition of a closed $1$-form.

\begin{thm}
\begin{enumerate}
    \item Given two Rochesterian $1$-forms $\al_1,\al_2\in\Om^1_{Roc}(M)$, $\{\al_1,\al_2\}\in\ker\Phi$ if and only if $\d\al_1$ is constant along the flow lines of $X_{\al_2}$ if and only if $\d\al_2$ is constant along the flow lines of $X_{\al_1}$.
    \item Let $\psi:(M,\vp)\to(M',\vp')$ be a diffeomorphism. Then $\psi$ is a $G_2$-morphism if and only if $\psi^*(\{\al,\be\})=\{\psi^*\al,\psi^*\be\}$ for all $\al,\be\in\Om^1_{Roc}(M')$.
\end{enumerate}
\end{thm}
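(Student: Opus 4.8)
The plan is to follow the blueprint of the symplectic Theorem of Section~1 essentially line by line, replacing $\om$ by $\vp$, Hamiltonian functions by Rochesterian $1$-forms and the Poisson bracket by $\{\cdot,\cdot\}$; the one structural difference to keep in view throughout is that $\tilde\vp:\X(M)\to\Om^2(M)$ is merely \emph{injective} rather than an isomorphism, and this is precisely where the $G_2$ argument must deviate from the symplectic one.

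For part (1), the single identity driving everything is
\[
\L_{X_{\al_2}}\d\al_1=\d\{\al_1,\al_2\}.
\]
To obtain it I would write $\d\al_1=X_{\al_1}\lrcorner\vp$ and apply the bracket rule $\L_X(Y\lrcorner\tau)=[X,Y]\lrcorner\tau+Y\lrcorner\L_X\tau$ recalled earlier: since $X_{\al_2}$ is Rochesterian it is a $G_2$-vector field, so $\L_{X_{\al_2}}\vp=0$, the second term drops, and what remains is $[X_{\al_2},X_{\al_1}]\lrcorner\vp$, which equals $\d\{\al_1,\al_2\}$ by the relation $X_{\{\al_1,\al_2\}}=[X_{\al_2},X_{\al_1}]$ established above. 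Writing $\phi_t$ for the flow of $X_{\al_2}$, I then compute $\tfrac{\d}{\d t}(\phi_t^*\d\al_1)=\phi_t^*\L_{X_{\al_2}}\d\al_1=\phi_t^*\d\{\al_1,\al_2\}$, exactly as in the symplectic proof, so $\d\al_1$ is constant along the flow lines of $X_{\al_2}$ precisely when $\d\{\al_1,\al_2\}=0$; by injectivity of $\tilde\vp$ this is the same as $X_{\{\al_1,\al_2\}}=0$, i.e. $\{\al_1,\al_2\}\in\ker\Phi$. The third equivalence is then immediate from the antisymmetry $\{\al_2,\al_1\}=-\{\al_1,\al_2\}$.

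For the forward direction of part (2), assume $\psi^*\vp'=\vp$. The first step is to show $\psi$ transports Rochesterian data compatibly, namely $X_{\psi^*\al}=\psi^*X_\al$: by naturality of the interior product, $\d(\psi^*\al)=\psi^*(X_\al\lrcorner\vp')=(\psi^*X_\al)\lrcorner(\psi^*\vp')=(\psi^*X_\al)\lrcorner\vp$, which simultaneously shows $\psi^*\al\in\Om^1_{Roc}(M)$ (so the right-hand bracket is defined) and identifies its Rochesterian vector field. Applying naturality twice then gives $\psi^*\{\al,\be\}=(\psi^*X_\be)\lrcorner(\psi^*X_\al)\lrcorner\vp=\vp(\psi^*X_\al,\psi^*X_\be,\cdot)$, while $\{\psi^*\al,\psi^*\be\}=\vp(X_{\psi^*\al},X_{\psi^*\be},\cdot)$, and the two agree by the identity just proved.

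For the converse I would imitate the symplectic argument: subtracting $\psi^*\{\al,\be\}=(\psi^*X_\be)\lrcorner\d(\psi^*\al)$ from $\{\psi^*\al,\psi^*\be\}=X_{\psi^*\be}\lrcorner\d(\psi^*\al)$ and inserting $\d(\psi^*\al)=X_{\psi^*\al}\lrcorner\vp$, the hypothesis collapses to $\vp(X_{\psi^*\al},\,\psi^*X_\be-X_{\psi^*\be},\,\cdot)=0$ for all $\al,\be$. From this one wants first $\psi^*X_\be=X_{\psi^*\be}$ and then, feeding that back into $(\psi^*X_\be)\lrcorner(\psi^*\vp'-\vp)=0$, the desired $\psi^*\vp'=\vp$. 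I expect this converse to be the main obstacle. In the symplectic case both deductions are automatic because Hamiltonian vector fields span every tangent space (any covector is some $\d f$); here $\tilde\vp$ is only injective, so the Rochesterian vector fields $\{X_{\psi^*\al}\}$ need not span $T_pM$. The nonformal point is therefore exactly the pointwise spanning of these vector fields, which is what lets one pass from $\vp(W,V,\cdot)=0$ for all $W$ to $V=0$ by nondegeneracy, and from $W\lrcorner(\psi^*\vp'-\vp)=0$ for all $W$ to $\psi^*\vp'-\vp=0$. This is the place where an abundance hypothesis on Rochesterian vector fields, in the spirit of the coordinate vector fields on the flat model $(\R^7,\vp_0)$, must enter.
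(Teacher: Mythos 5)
Your part (1) and the forward half of part (2) are, modulo notation, exactly the paper's proof. For (1), the paper also reduces everything to the identity $\d\{\al_1,\al_2\}=\L_{X_{\al_2}}(\d\al_1)$; it derives this by applying Cartan's formula to $\{\al_1,\al_2\}=X_{\al_2}\lrcorner\d\al_1$ and then taking $\d$, whereas you apply the Lie-derivative/bracket identity to $X_{\al_1}\lrcorner\vp$ and use $\L_{X_{\al_2}}\vp=0$; the two computations are interchangeable. For the forward half of (2), the paper establishes $(X_{\psi^*\al})_p=\d\psi^{-1}_{\psi(p)}(X_\al)_{\psi(p)}$, which is your $X_{\psi^*\al}=\psi^*X_\al$ written pointwise, and then concludes just as you do.

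The converse of (2), where you stop, is where your write-up and the paper diverge --- but not in the way you might expect: the paper performs precisely the two deductions you outline and simply asserts them. From $(\d\psi^{-1}X_\be)\lrcorner(\psi^*\d\al)=X_{\psi^*\be}\lrcorner(\psi^*\d\al)$ for all $\al$ it concludes $\d\psi^{-1}X_\be=X_{\psi^*\be}$, and from $X_{\psi^*\al}\lrcorner\vp=X_{\psi^*\al}\lrcorner\psi^*\vp'$ for all $\al$ it concludes $\vp=\psi^*\vp'$, with no justification of either implication. As you observe, both require abundance of Rochesterian vector fields, and the requirement can be made precise: for a $G_2$-form one has $\vp_p(u,v,\cdot)=0$ exactly when $u$ and $v$ are linearly dependent (since $\vp(u,v,w)=\langle u\t v,w\rangle$ and $|u\t v|^2=|u|^2|v|^2-\langle u,v\rangle^2$), so the first deduction needs the values of the Rochesterian vector fields at each point to not all lie on a single line, while the second needs them to span the whole tangent space. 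Neither is automatic: on a closed $M'$, Theorem 3.2 forces all $X_\al\equiv 0$, so the bracket hypothesis holds vacuously for every diffeomorphism while non-$G_2$-morphisms certainly exist; this is presumably why the paper's standing assumption excludes closed manifolds, although its proof of the converse never invokes that assumption or any spanning property. So the obstacle you flagged is genuine, but it is a lacuna in the paper's own argument rather than a defect peculiar to your attempt; your proposal reproduces the paper's proof and is more candid about exactly where it is incomplete.
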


\begin{proof}
\begin{enumerate}
    \item Again, we show only the first equivalence since the second equivalence follows similarly. From the definition of the bracket, we have $$\{\al_1,\al_2\}=\vp(X_{\al_1},X_{\al_2},\cdot)=X_{\al_2}\lrcorner(X_{\al_1}\lrcorner\vp)$$ $$=X_{\al_2}\lrcorner\d\al_1=\L_{X_{\al_2}}\al_1-\d(X_{\al_2}\lrcorner\al_1).$$ From this, we see that $\d\{\al_1,\al_2\}=\d\L_{X_{\al_2}}\al_1=\L_{X_{\al_2}}(\d\al_1)$. Then $\{\al_1,\al_2\}\in\ker\Phi$ if and only if $\L_{X_{\al_2}}(\d\al_1)=0$.
    \item First, assume that $\psi$ is a $G_2$-morphism, and note that for $p\in M$, we have the maps
    \begin{equation*}
    \begin{split}
    &\d\psi_p:T_pM\to T_{\psi(p)}M'\\
    &\psi^*_p:T^*_{\psi(p)}M'\to T^*_pM\\
    &\d\psi^{-1}_{\psi(p)}=(\d\psi_p)^{-1}:T_{\psi(p)}M'\to T_pM
    \end{split}
    \end{equation*}
    Since $\psi$ is a $G_2$-morphism, $\psi^*\vp'=\vp$ and $(\psi^{-1})^*\vp=\vp'$, so by definition, for $p\in M$, we then get the following equivalent equations
    \begin{equation*}
    \begin{split}
    \vp_p(\cdot,\cdot,\cdot)&=\psi^*_p(\vp'_{\psi(p)})(\cdot,\cdot,\cdot)=\vp'_{\psi(p)}(\d\psi_p\cdot,\d\psi_p\cdot,\d\psi_p\cdot)\\
    \vp'_{\psi(p)}(\cdot,\cdot,\cdot)&=(\psi^{-1}_{\psi(p)})^*(\vp_p)(\cdot,\cdot,\cdot) =\vp_p(\d\psi^{-1}_{\psi(p)}\cdot,\d\psi^{-1}_{\psi(p)}\cdot,\d\psi^{-1}_{\psi(p)}\cdot)
    \end{split}
    \end{equation*}
    Thus, we calculate for a Rochesterian $1$-form $\al\in\Om^1_{Roc}(M')$ and vector fields $Y,Z$ on $M$,
    \begin{equation*}
    \begin{split}
    (X_{\psi^*\al}\lrcorner\vp)_p(Y_p,Z_p)&=\d(\psi^*\al)_p(Y_p,Z_p)\\
    &=\psi^*_p(\d\al_{\psi(p)})(Y_p,Z_p)\\
    &=\psi^*_p((X_{\al}\lrcorner\vp')_{\psi(p)})(Y_p,Z_p)\\
    &=\psi^*_p(\vp'_{\psi(p)}((X_{\al})_{\psi(p)},\cdot,\cdot))(Y_p,Z_p)\\ 
    &=\vp'_{\psi(p)}((X_{\al})_{\psi(p)},\d\psi_pY_p,\d\psi_pZ_p)\\ 
    &=\vp_p(\d\psi^{-1}_{\psi(p)}(X_{\al})_{\psi(p)},\d\psi^{-1}_{\psi(p)}(\d\psi_pY_p), \d\psi^{-1}_{\psi(p)}(\d\psi_pZ_p))\\ &=\vp_p(\d\psi^{-1}_{\psi(p)}(X_{\al})_{\psi(p)},Y_p,Z_p)
    \end{split}
    \end{equation*}
    that is, $(X_{\psi^*\al})_p=\d\psi^{-1}_{\psi(p)}(X_{\al})_{\psi(p)}$. Hence we find that
    \begin{equation*}
    \begin{split}
    (\psi^*\{\al,\be\})_p(Y_p)&=(\psi^*(\vp'(X_{\al},X_{\be},\cdot)))_p(Y_p)\\
    &=\psi^*_p(\vp'_{\psi(p)}((X_{\al})_{\psi(p)},(X_{\be})_{\psi(p)},\cdot))(Y_p)\\
    &=\vp'_{\psi(p)}((X_{\al})_{\psi(p)},(X_{\be})_{\psi(p)},\d\psi_pY_p)\\
    &=\vp_p(\d\psi^{-1}_{\psi(p)}(X_{\al})_{\psi(p)},\d\psi^{-1}_{\psi(p)}(X_{\be})_{\psi(p)},\d\psi^{-1}_{\psi(p)}(\d\psi_pY_p))\\ &=\vp_p((X_{\psi^*\al})_p,(X_{\psi^*\be})_p,Y_p)\\
    &=\vp_p((X_{\psi^*\al})_p,(X_{\psi^*\be})_p,\cdot)(Y_p)=\{\psi^*\al,\psi^*\be\}_p(Y_p)
    \end{split}
    \end{equation*}
    Conversely, assume that $\psi^*(\{\al,\be\})=\{\psi^*\al,\psi^*\be\}$ for all $\al,\be\in\Om^1_{Roc}(M')$. Then, for any $\al,\be\in\Om^1_{Roc}(M')$ we have
    \begin{equation*}
    \begin{split}
    \psi^*(\{\al,\be\})&=\psi^*(\vp'(X_{\al},X_{\be},\cdot))=\psi^*(X_{\be}\lrcorner X_{\al}\lrcorner\vp')=\psi^*(X_{\be}\lrcorner\d\al)\\
    &=\psi^*(\d\al(X_{\be},\cdot))=\d\al(X_{\be},\d\psi\cdot)=\d\al(\d\psi(\d\psi^{-1}X_{\be}),\d\psi\cdot)\\
    &=(\psi^*\d\al)(\d\psi^{-1}X_{\be},\cdot)=(\d\psi^{-1}X_{\be})\lrcorner(\psi^*\d\al)
    \end{split}
    \end{equation*}
    and
    \begin{equation*}
    \{\psi^*\al,\psi^*\be\}=\vp(X_{\psi^*\al},X_{\psi^*\be},\cdot)=X_{\psi^*\be}\lrcorner\d(\psi^*\al)=X_{\psi^*\be}\lrcorner(\psi^*\d\al)
    \end{equation*}
    which, by our hypothesis, yields that $\d\psi^{-1}X_{\be}=X_{\psi^*\be}$ for any $\be\in\Om^1_{Roc}(M')$. Then for any $\al\in\Om^1_{Roc}(M')$, any vector fields $Y,Z\in\X(M)$ and $p\in M$,
    \begin{equation*}
    \begin{split}
    (X_{\psi^*\al}\lrcorner\vp)_p(Y_p,Z_p)&=\d(\psi^*\al)_p(Y_p,Z_p)=(\psi^*\d\al)_p(Y_p,Z_p)\\
    &=\psi^*_p(X_{\al}\lrcorner\vp')_p(Y_p,Z_p)\\
    &=\vp'_{\psi(p)}((X_{\al})_{\psi(p)},\d\psi_pY_p,\d\psi_pZ_p)\\
    &=\vp'_{\psi(p)}(\d\psi_p(\d\psi^{-1}_{\psi(p)}(X_{\al})_{\psi(p)}),\d\psi_pY_p,\d\psi_pZ_p)\\
    &=(\psi^*\vp')_{\psi(p)}(\d\psi^{-1}_{\psi(p)}(X_{\al})_{\psi(p)},Y_p,Z_p)\\
    &=(\psi^*\vp')_{\psi(p)}((X_{\psi^*\al})_p,Y_p,Z_p)
    \end{split}
    \end{equation*}
    \noindent
    Thus $X_{\psi^*\al}\lrcorner\vp=X_{\psi^*\al}\lrcorner\psi^*\vp'$ which implies that $\vp=\psi^*\vp'$ as desired.
\end{enumerate}
\end{proof}


\begin{thebibliography}{[FP]}

\bibitem{AkSa} Akbulut, S. and Salur, S., {\it Mirror Duality via $G_2$ and $Spin(7)$ Manifolds}, Arithmetic and Geometry Around Quantization, Progress in Mathematics, Birkhäuser Boston, $(2010)$.

\bibitem{ACS} Arikan, M., Cho, H. and Salur, S., {\it Existence of Compatible Contact Structures on $G_2$-Manifolds}, math.DG/1112.2951v1, $(2011)$.

\bibitem{AtWi} Atiyah, M. and Witten, E., {\it M-Theory Dynamics on a Manifold of $G_2$ Geometry}, hep-th/0107177v3, $(2002)$.

\bibitem{BHR} Baez, J., Hoffnung, A. and Rogers, C., {\it Categorified Symplectic Geometry and the Classical String}, Comm. Math. Phys., {\bf $293$}, $(2010)$, no. $3$, pp. $701-725$.

\bibitem{BR} Baez, J., Rogers, C., {\it Categorified Symplectic Geometry and the String Lie $2$-Algebra}, Homology, Homotopy Appl., {\bf $12$}, $(2010)$, no. $1$, pp. $221-236$.

\bibitem{BrGr} Brown, R. and Gray, A., {\it Vector cross products}, Comment. Math. Helv., {\bf $42$}, $(1967)$, pp. $222-236$.

\bibitem{Br1} Bryant, R., {\it Metrics with Exceptional Holonomy}, Annals of Mathematics, Volume $126$, $(1987)$, pp. $526-576$.

\bibitem{Br2} Bryant, R., {\it Some remarks on $G_2$-Structures}, Proceeding of Gokova Geometry-Topology Conference $2005$, International Press, $75--109$ $(2006)$.

\bibitem{BrXu} Bryant, R. and Xu, F., {\it Laplacian Flow for Closed $G_2$-Structures: Short Time Behavior}, math.DG/1101.2004v1, $(2011)$.

\bibitem{BrSa} Bryant, R. and Salamon, D., {\it On the Construction of Some Complete Metrics with Exceptional Holonomy}, Duke Math. J., Volume $58$, Number $3$, $(1989)$, $pp. 829-850$.

\bibitem{CMS} Cabrera, F., Monar, M. and Swann, A., {\it Classification of $G_2$-structures}, J. London Math. Soc., {\bf $53$}, $(1996)$, pp. $407-416$.

\bibitem{CIdL1} Cantrijn, F., Ibort, A. and de Le\'on, M., {\it Hamiltonian Structures on Multisymplectic Manifolds}, Rend. Sem. Mat. Univ. Politc. Torino, {\bf $54$}, $(1996)$, pp. $225-236$.

\bibitem{CIdL2} Cantrijn, F., Ibort, A. and de Le\'on, M., {\it On the Geometry of Multisymplectic Manifolds}, J. Austral. Math. Soc., {\bf $66$}, $(1999)$, pp. $303-330$.

\bibitem{CCI} Cariena, J. F., Crampin, M. and Ibort, A., {\it On the Multisymplectic Formalism for First Order Field Theories}, Differential Geometry and its Applications, {\bf $1$}, $(1991)$, no. $4$, pp. $345-374$

\bibitem{CST} Cho, H., Salur, S. and Todd, A. J., {\it A note on closed $G_2$-structures and $3$-manifolds}, math.DG/1112.0830v1, $(2011)$.

\bibitem{ClIv} Cleyton, R. and Ivanov, S., {\it On the Geometry of Closed $G_2$-Structures}, math.DG/0306362v3, $(2003)$.

\bibitem{Fe1} Fernandez, M., {\it An example of a compact calibrated manifold associated with the exceptional Lie group $G_2$}, J. Differential Geom., {\bf $26$}, $(1987)$, no. $2$, pp. $367-370$.

\bibitem{Fe2} Fernandez, M., {\it A family of compact solvable $G_2$-calibrated manifolds}, Tohoku Math. J., {\bf $(2)$ $39$}, $(1987)$, no. $2$, pp. $287-289$.

\bibitem{FeGr} Fernandez, M. and Gray, A., {\it Riemannian manifolds with structure group $G_2$}, Ann. Mat. Pura Appl., {\bf $(4)$ $132$}, $(1982)$, pp. $19-45$.

\bibitem{FeIg} Fernandez, M. and Iglesias, T., {\it New examples of Riemannian manifolds with structure group $G_2$}, Rend. Circ. Mat. Palermo, {\bf $(2)$ $35$}, $(1986)$, no. $2$, pp. $276-290$.

\bibitem{Gray} Gray, A., {\it Vector cross products on manifolds}, Trans. Amer. Math. Soc., {\bf $141$}, $(1969)$, pp. $465-504$.

\bibitem{GYZ} Gukov, S., Yau, S.-T. and Zaslow, E., {\it Duality and Fibrations on $G_2$ Manifolds}, hep-th/0203217v1, $(2002)$.

\bibitem{HaLa} Harvey, F.R. and Lawson, H.B., {\it Calibrated Geometries}, Acta. Math. {\bf 148} (1982), 47--157.

\bibitem{Jo} Joyce, D., {\it Compact Manifolds with Special Holonomy}, Oxford Mathematical Monographs, OUP, $(2000)$.

\bibitem{Kari} Karigiannis, S., {\it Deformations of $G_2$ and $Spin(7)$ Structures on Manifolds}, math.DG/0301218v3, $(2005)$.

\bibitem{LeLe} Lee, J.-H. and Leung, N. C., {\it Geometric Structures on $G_2$ and $Spin(7)$-Manifolds}, math.DG/0202045v2, $(2007)$.

\bibitem{Leun} Leung, N. C., {\it Topological Quantum Field Theory for Calabi-Yau Threefolds and $G_2$-Manifolds}, math.DG/0208124v5, $(2003)$.

\bibitem{McSa} McDuff, D. and Salamon, D., {\it Introduction to Symplectic Topology}, Oxford University Press, $(1998)$.

\bibitem{PR} Paufler, C. and R\"omer, H., {\it Geometry of Hamiltonian $n$-Vector Fields in Multisymplectic Field Theory}, Journal of Geometry and Physics, {\bf $44$}, $(2002)$, no. $1$, pp. $52-69$.

\bibitem{Sa} Salamon, S., {\it Riemannian Geometry and Holonomy Groups}, Pitman Research Notes in Mathematics Series, $(1989)$.

\bibitem{daSi} da Silva, A., {\it Lectures on Symplectic Geometry}, Lecture Notes in Mathematics, Springer, $(2001)$.

\end{thebibliography}
\end{document}